\newcolumntype{L}{>{$}l<{$}} % math-mode version of "l" column type
\newtheorem{thm}{Theorem}
\newtheorem{theorem}[thm]{Theorem}
\newtheorem{lemma}[thm]{Lemma}
\newtheorem{prop}[thm]{Proposition}
\newtheorem{conjecture}[thm]{Conjecture}
	\newcounter{countknownthm}
\newtheorem{knownthm}[countknownthm]{Theorem}
	\newcounter{countknownlem}
\newtheorem{knownlem}[countknownlem]{Lemma}
\theoremstyle{definition}
\newtheorem{definition}[thm]{Definition}
\def \R {{\mathbb R}}
\def \a {{\mathfrak a}}
\def \N {{\mathbb N}}
\def \C {{\mathbb C}}
\def \Z {{\mathbb Z}}
\def \Q {{\mathbb Q}}
\DeclareMathOperator{\disc}{disc}
\def\eps{\varepsilon}
\def\Z{\mathbb{Z}}
\def\Q{\mathbb{Q}}
\def\R{\mathbb{R}}
\def\C{\mathbb{C}}
\def\p{\mathfrak{p}}
\def\OO{\mathcal{O}}
\def\bb{,\ldots,}
\title[Thue equations over $\C(T)$: The Complete Solution of a Simple quartic family]{Thue equations over $\C(T)$: The Complete Solution of a Simple quartic family}
\subjclass[2020]{11D59, 11D25, 11Y50}
\keywords{Thue equation, function fields}
\author[B. Faye]{Bernadette Faye}
\address{B. Faye,
	UFR SATIC, Universit\'e Alioune Diop de Bambey,
	Diourbel, Bambey 30, S\'en\'egal}
\email{bernadette.faye@uadb.edu.sn}
\author[I. Vukusic]{Ingrid Vukusic}
\address{I. Vukusic,
University of Salzburg,
Hellbrunnerstrasse 34/I,
A-5020 Salzburg, Austria.  Current address:  Department of Mathematics,
University of York,
York, North Yorkshire YO10 5GH,
United Kingdom.}
\email{ingrid.vukusic@york.ac.uk}
\author[E. Waxman]{Ezra Waxman}
\address{E. Waxman,
	University of Haifa, Department of Mathematics, 199 Aba Khoushy Ave., Mt. Carmel, Haifa, 3498838}
	\address{Unit of Mathematics, Afeka — The Academic College of Engineering in Tel Aviv, Mivtsa Kadesh St 38, Tel Aviv-Yafo 6998812, Israel}
\email{ezrawaxman@gmail.com}
\author[V. Ziegler]{Volker Ziegler}
\address{V. Ziegler,
University of Salzburg,
Hellbrunnerstrasse 34/I,
A-5020 Salzburg, Austria}
\email{volker.ziegler\char'100plus.ac.at}
\begin{document}

% ***********************************************************************************

\begin{abstract}
In this paper we completely solve a simple quartic family of Thue equations over $\mathbb{C}(T)$.  Specifically, we apply the ABC-Theorem to find all solutions $(x,y) \in \mathbb{C}[T] \times \mathbb{C}[T]$ to the set of Thue equations $F_{\lambda}(X,Y) = \xi$, where $\xi \in \mathbb{C}^{\times}$  and
\begin{equation*}
F_{\lambda}(X,Y):=X^4 -\lambda X^3Y -6 X^2Y^2 + \lambda XY^3 +Y^4,  \quad \quad \lambda \in \mathbb{C}[T]/\{\mathbb{C}\}
\end{equation*}
denotes a family of quartic simple forms. 
\end{abstract}

\maketitle
% ***********************************************************************************

%%%%%%%%%%%%%%%%%%%%%%
\section{Introduction}
\textit{Diophantine equations}, named after Diophantus of Alexandra, have been an enduring topic of mathematical interest from antiquity up until the modern era.  Pythagoras, for example, studied integer solutions to the equation $X^{2}+Y^{2}=Z^{2}$, while  Brahmagupta, Euler, and Fermat studied such solutions to the equation $61X^2 + 1 = Y^2$.  By the twentieth century, a much richer general theory of Diophantine equations began to emerge.  Axel Thue \cite{Thue1909}, for instance, considered equations of the form $F(X,Y) = m$, where  $m$ is a non-zero integer, and $F(X,Y) \in \Z[X,Y]$ is an irreducible homogeneous \textit{binary form} of degree $n \geq 3$.  In 1909, he managed to prove that such equations (now known as \textit{Thue equations}) have only finitely many integer solutions $(x,y) \in \Z^{2}$. Thue's result, however, was not \textit{effective}, i.e. did not provide a bound for the size of such solutions.  Baker \cite{Baker1966-68} resolved this in the 1960's, by developing powerful methods to compute lower bounds for linear forms in logarithms.  Such tools could then be applied to solve Thue equations effectively.  In other words, Baker's method managed to reduce, to a finite amount of computation, the 
problem of determining all integer solutions $(x,y) \in \Z^{2}$ to a given Thue equation.

\subsection{Families of Thue Equations}
One direction of investigation then turned towards studying parametrized \textit{families} of Thue equations.  E. Thomas \cite{Thomas1990},  for instance, considered the family of cubic forms

\begin{equation}\label{eq:cubic}
F^{(3)}_{t}(X,Y) := X^3-(t-1)X^2Y-(t+2)XY^{2}-Y^{3}
\end{equation}
for $t \in \Z_{\geq 0}$.  He conjectured that for $t \geq 4$, the Thue equation
\[F^{(3)}_{t}(X,Y) = \pm 1 \]
has only the ``trivial" solutions $(x,y) \in \{(0,\mp 1), (\pm 1,0), (\mp 1,\pm 1)\}$.  Such a conjecture was eventually proved correct by Mignotte \cite{Mignotte1993}.   More general questions related to such Thue equations were addressed in \cite{Gaal2019, LettlPethoVoutier1999}. Lettl and Peth\H{o} \cite{LettlPetho1995} then investigated the family of quartic forms
\begin{equation}\label{eq:quartic}
F_{t}^{4}(X,Y):=X^4 -tX^3Y -6X^2Y^2 + tXY^3 +Y^4
\end{equation}
and determined the complete solution set for Thue equations of the form $F_{t}^{4}(X,Y) = m$, where $t \in \Z$ and $m \in \{\pm 1, \pm 4\}$.  The families in \eqref{eq:cubic}
and \eqref{eq:quartic} are known as \textit{simple forms}, and are discussed below in Section \ref{sec:Simple forms} in further detail.  For a general survey discussion about families of Thue equations see \cite{Heuberger2006}.

\subsection{Thue Equations Over Function Fields}
One may also consider Thue equations in the function field setting.  More precisely, we consider equations of the form $F(X,Y) = m$, for some non-zero $m \in \C[T]$, where
\begin{equation*}
F(X,Y) = a_{0}X^{n} + a_{1}X^{n-1}Y+\cdots + a_{n-1}X Y^{n-1}+a_{n}Y^{n}, \hspace{5mm} a_{i} \in \C[T],
\end{equation*}
is irreducible of degree $n \geq 3$, and where we now seek solutions $(x,y) \in \C[T] \times \C[T]$.  By applying a function field analogue of Thue's method, Gill \cite{Gill1930} demonstrated that the solutions to any such equation have bounded degree. Using methods developed by Osgood \cite{Osgood1973}, Schmidt managed to obtain explicit bounds on the degree of such solutions.  In contrast to classical Thue equations, however, such a bound does not directly imply that only finitely many
such solutions exist.  Mason \cite{Mason1981, Mason1984} eventually succeeded in demonstrating that the solution set of a Thue equation over $\C(T)$ may be effectively determined.  For a history on the development of Thue equations over function fields see \cite{Mason1983}.

Families of Thue equations over $\C(T)$ were first discussed in \cite{FuchsZiegler2006}, and the $\C(T)$ analogue of \eqref{eq:cubic} was resolved in \cite{FuchsZiegler}.  The purpose of this work is to investigate the $\C(T)$ analogue of \eqref{eq:quartic}.  We obtain the following result:

\begin{theorem}\label{thm:main}
Fix a non-constant $\lambda \in \C[T]$, and consider the (homogeneous) polynomial
\begin{equation}\label{eq:FF_Forms}
F_{\lambda}(X,Y) :=X^4 -\lambda X^3Y -6 X^2Y^2 + \lambda XY^3 +Y^4.
\end{equation}
Then for any $\xi \in \C^{\times}$ the solution set of the Thue equation 
$$F_{\lambda}(X,Y) =\xi$$
is equal to
\begin{align*}
S_{\lambda,\xi}&:=\{(x,y)\in \C[T] \times \C[T]: F_{\lambda}(x,y)= \xi\}\\
&\phantom{:}=\{(\eta,0),(0,\eta): \eta^{4} = \xi \} \cup \{(\eta,\eta), (\eta,-\eta): -4\eta^{4} = \xi \}.
\end{align*}
\end{theorem}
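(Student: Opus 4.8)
The plan is to pass to the splitting field of $F_\lambda$, recognise the quantities $x-\theta_iy$ as units away from the place $T=\infty$, and feed a Siegel-type relation among them into Mason's ABC-theorem.

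Write $f_\lambda(X)=F_\lambda(X,1)=X^4-\lambda X^3-6X^2+\lambda X+1$. From the identity $F_\lambda(X-Y,X+Y)=-4\,F_\lambda(X,Y)$ one sees that the Möbius map $\phi(z)=\frac{z-1}{z+1}$, which has order $4$, cyclically permutes the roots of $f_\lambda$; hence for non-constant $\lambda$ the polynomial $f_\lambda$ is irreducible over $\C(T)$, its roots are $\theta_i=\phi^i(\theta)$ for any fixed root $\theta=\theta_0$, one has $F_\lambda(X,Y)=\prod_{i=0}^3(X-\theta_iY)$, and $K:=\C(T)(\theta)$ is a cyclic quartic extension of $\C(T)$. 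Inspecting $f_\lambda$ at $T=\infty$ (one root behaves like $\lambda$, one like $-1/\lambda$, and two tend to $\pm1$) shows that $\infty$ splits completely in $K$, into four places $P_0,\dots,P_3$. For a solution $(x,y)$ put $\beta_i:=x-\theta_iy$; then $\prod_i\beta_i=N_{K/\C(T)}(x-\theta y)=F_\lambda(x,y)=\xi\in\C^\times$, and since each $\beta_i$ is integral over $\C[T]$ while the product is a constant, every $\beta_i$ is a unit at every finite place, i.e.\ $\mathrm{div}(\beta_i)$ is supported on $\{P_0,\dots,P_3\}$. If $xy=0$ one lands directly in $\mathcal S_{\lambda,\xi}$, so we may assume $xy\neq0$ (whence all $\beta_i\neq0$). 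The key reduction is that it now suffices to prove $x,y\in\C$: for constant $x,y$ one has $F_\lambda(x,y)=(x^4-6x^2y^2+y^4)+\lambda\,xy(y^2-x^2)$, which can equal the constant $\xi$ only if $xy(y^2-x^2)=0$, and that is precisely the set $\mathcal S_{\lambda,\xi}$.

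Because the roots are Möbius conjugates there are relations among the $\beta_i$ whose coefficients are units off $\infty$. Using $(\theta+1)\theta_1=\theta-1$ and $\theta\,\theta_2=-1$ one checks
\[(\theta+1)\beta_1-\theta\beta_2=\beta_0,\]
and $\theta$ and $\theta+1$ have constant norms ($N(\theta)=f_\lambda(0)=1$, $N(\theta+1)=f_\lambda(-1)=-4$), hence are units at every finite place. Dividing by $\beta_0$ gives $U+V=1$ with $U=(\theta+1)\beta_1/\beta_0$ and $V=-\theta\beta_2/\beta_0$, and the point is that $U$ and $V$ are $S$-units for $S=\{P_0,\dots,P_3\}$. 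Assuming $U\notin\C$, Mason's ABC-theorem over $K$ yields $H_K(U)\le|S|+2g_K-2=2g_K+2$. Now $K/\C(T)$ is unramified at $\infty$ and ramifies only over the zeros of $\lambda^2+16$ (the support of $\mathrm{disc}\,f_\lambda$), where $f_\lambda$ degenerates to a perfect fourth power $(X\mp i)^4$ and the extension is totally ramified; Riemann--Hurwitz then gives $g_K=3\deg\lambda-3$ when $\lambda^2+16$ is squarefree, and no larger in general. Hence $H_K(U)\le 6\deg\lambda-4$, and similarly for the analogous quantities obtained from the Galois-conjugate index triples.

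It remains to analyse the $\beta_i$ locally at the four places over $\infty$, where the roots have expansions $\theta_0\sim\lambda$, $\theta_1\to1$, $\theta_2\sim-1/\lambda$, $\theta_3\to-1$ (cyclically permuted among $P_0,\dots,P_3$). With $m=\deg x$, $n=\deg y$, reading off $v_{P_0}(\beta_i)$ from these expansions and imposing $\sum_i v_{P_0}(\beta_i)=0$ (forced by $\prod_i\beta_i\in\C^\times$), one finds this is impossible except in the boundary configurations $m=n$ with proportional leading coefficients, or $m=\deg\lambda+n$ (together with their mirror images under $(x,y)\mapsto(-y,x)$). In a boundary configuration some $\beta_i$ is forced to acquire a zero of order comparable to $\max(m,n)$ at a place over $\infty$, so $H_K(U)$ (or a sister quantity) is $\gtrsim\max(m,n)$; combined with the previous paragraph this gives $\max(\deg x,\deg y)=O(\deg\lambda)$. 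One removes this last possibility by descent along $\phi$: the substitution $(x,y)\mapsto(x-y,x+y)$ sends a solution of $F_\lambda=\xi$ to a solution of $F_\lambda=-4\xi$ and strictly lowers $\deg(x-y)$ in the first boundary case, so — the statement being asserted for every $\xi\in\C^\times$ simultaneously — iterating the argument together with the uniform ABC bound drives $\deg x,\deg y$ to $0$. The degenerate case $U\in\C$ means $\beta_1$ is a constant multiple of $\beta_0/(\theta+1)$, which one checks is incompatible with $\beta_0=x-\theta y$, $\beta_1=x-\theta_1y$ unless $y=0$. This gives $x,y\in\C$, and hence $\mathcal S_{\lambda,\xi}$ is exactly as stated.

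The main obstacle is that the genus $g_K\sim 3\deg\lambda$ grows with $\deg\lambda$, so the ABC-theorem by itself only bounds $\deg x,\deg y$ by a multiple of $\deg\lambda$ rather than pinning them to $0$; the delicate step — and the one I expect to require the most care — is the combined use of the rigid local structure at $\infty$ and the order-$4$ symmetry $\phi$ to eliminate the boundary configurations (above all the case $\deg x=\deg y$) uniformly over all non-constant $\lambda$.
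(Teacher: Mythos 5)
Your first half runs parallel to the paper: irreducibility of $f_\lambda$, the $\beta_i=x-\theta_i y$ being units outside the infinite places, Siegel's identity feeding into Mason's ABC theorem, and the genus bound coming from $\disc(f_\lambda)=4(\lambda^2+16)^3$ via Riemann--Hurwitz (your claim of \emph{total} ramification above the zeros of $\lambda^2+16$ is unproved, but harmless, since only the upper bound $g_K\le 3\deg\lambda-3$ is used). Up to the ABC estimate $H_K(U)=O(\deg\lambda)$ your sketch is sound and is essentially the paper's route.

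The genuine gap is the passage from that bound, which grows with $\deg\lambda$, to the conclusion $x,y\in\C$. The paper closes it by proving that $\alpha-1,\ \alpha,\ \alpha+1$ form a fundamental system for $\C[T][\alpha]^\times$ with infinite-place valuation vectors $(\deg\lambda,0,0,-\deg\lambda)$, etc.; writing $\beta=\eta(\alpha-1)^r\alpha^s(\alpha+1)^t$, the height bound $11\deg\lambda-4$ then forces the exponents into an absolutely bounded set ($|r|,|s|,|t|\le 10$, independent of $\lambda$), and a finite symbolic computation over $\Q(\lambda)$ determines which of these units have the shape $x-\alpha y$. Your substitute for this step --- the local degree analysis at infinity plus ``descent'' along $(x,y)\mapsto(x-y,x+y)$ --- does not work as stated. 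Applying that substitution twice gives $(-2y,\,2x)$, i.e.\ the original solution up to the symmetry $(x,y)\mapsto(-y,x)$ and scaling, so the map cannot be iterated to strictly decrease $\max(\deg x,\deg y)$: one application may lower $\deg(x-y)$, but $\deg(x+y)$ stays at the old maximum, and the next application returns you to where you started. Consequently nothing in your argument excludes a putative solution with $\max(\deg x,\deg y)$ of order $\deg\lambda$ --- exactly the kind of candidate that corresponds to $\beta$ being a nontrivial monomial in the units $\alpha-1,\alpha,\alpha+1$, and which the paper eliminates only by the explicit unit-group computation. The assertion that the iteration ``drives $\deg x,\deg y$ to $0$'' is therefore unsupported, and some uniform-in-$\lambda$ mechanism (such as the fundamental-unit decomposition, or an equivalent) is still needed to finish the proof.
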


\subsection{Simple Forms}\label{sec:Simple forms}
To motivate the study of simple forms, consider the \textit{M\"{o}bius} map $\phi: z \mapsto \frac{az+b}{cz+d}$, with $a,b,c,d \in \Z$.
Let $G_{\phi} =\langle \phi \rangle$ denote the cyclic group generated by $\phi$.  If $\phi$ has finite order, it may be shown that $|G_{\phi}| \in \{1,2,3,4,6\}$.  Let $\phi$ be a \textit{M\"{o}bius} map of finite order, and suppose there exists an irreducible form $F(X,Y) \in \Z[X,Y]$ of degree $n \in \{3,4,6\}$ such that $G_{\phi}$ acts transitively on the roots of $F(X,1)$.
Lettl, Peth\H{o}, and Voutier \cite{LettlPethoVoutier1999} refer to such forms as \textit{simple forms}.

As an example, consider the map $\phi: z \mapsto \frac{-1}{z+1}$, which generates a cyclic group $G_{\phi}$ of order 3.  We ask for the set of irreducible cubic polynomials $f(X)$ upon whose roots $G_{\phi}$ acts transitively. Such polynomials must be of the form
\begin{align*}
f^{(3)}_{t}(X) &= (X-\alpha)(X-\phi(\alpha))(X-\phi^{2}(\alpha))\\
&= X^{3}+\left(\frac{1}{\alpha}+\frac{1}{1+\alpha}-\alpha+1\right)X^{2}+\left(\frac{1}{\alpha}+\frac{1}{1+\alpha}-\alpha-2\right)X-1\\
&=X^3-(t-1)X^2-(t+2)X-1
\end{align*}
where $\alpha$ denotes a root of $f_{t}^{(3)}(X)$, and
where $t:= \alpha-\frac{1}{\alpha}-\frac{1}{1+\alpha}$.  We then obtain the family of simple cubic forms in \eqref{eq:cubic} upon restricting $t \in \Z_{\geq 0}$.

Two forms $F(X,Y),G(X,Y) \in \Q[X,Y]$ are said to be \textit{equivalent} if there exists a $t \in \Q^{\times}$ and a matrix $\begin{pmatrix}
p & q\\
r & s
\end{pmatrix}
 \in GL_{2}(\Q)$
such that $G(X,Y) = t \cdot F(pX+qY,rX+sY)$.  It may be demonstrated that any simple form is equivalent to a form in one of the following two parameter families:
\begin{align*}\label{eq:two_parameter_simple_forms}
\begin{split}
F_{s,t}^{(3)}(X,Y) &= sX^{3}-(t-s) X^{2}Y -(t+2s)XY^{2}-sY^{3},\\
F_{s,t}^{(4)}(X,Y) &= sX^{4}-t X^{3}Y - 6sX^{2}Y^{2}+tXY^{3}+sY^{4},\\
F_{s,t}^{(6)}(X,Y) &= s X^{6}-2tX^{5}Y - (5t+15s)X^{4}Y^{2}-20sX^{3}Y^{3}+5tX^{2}Y^{4}+(2t+6s)XY^{5}+sY^{6}.
\end{split}
\end{align*}
Above we only consider irreducible such forms, and moreover restrict $s \in \N$, $t \in \Z$ such that $(s,t)=1$.  These two-parameter families of forms have been studied in \cite{Wakabayashi2007} by applying the hypergeometric method.

When $s=1$, the corresponding polynomial $f^{(i)}_{t}(X):=F^{(i)}_{1,t}(X,1)$ is monic with constant term $\pm 1$, which enables an easier application of Baker's method to the study of such forms.  Note that the family of cubic forms $F^{(3)}_{1,t}(X,Y)$, $t \in \Z_{\geq 0}$, corresponds to those in \eqref{eq:cubic}, while the family of quartic forms $F^{(3)}_{1,t}(X,Y)$, $t \in \Z$, corresponds to those in \eqref{eq:quartic}.

\subsection{Solving Thue Equations: Siegel's Identity and $S$-Unit Equations}\label{sec:siegel_intro}
The method for solving Thue equations in both the number field and function field settings begins similarly.  We specialize to the case where $A$ denotes either the ring $\Z$ or the ring $\C[T]$.  Let $(x,y)\in A^{2}$ denote a solution to the Thue equation
\begin{equation}\label{eq:Thue_equation}
F(X,Y) = m,
\end{equation}
where $F(X,Y) \in A[X,Y]$ is a homogeneous form of degree $n\geq 3$, and $m \in A$ is non-zero.  For simplicity, we moreover assume that $f(X):=F(X,1)$ is monic, so that we may factor 
\begin{equation}\label{eq:factorization_in_beta}
F(x,y) = (x-\alpha_{1} y)\dots (x-\alpha_{n} y) = m,
\end{equation}
where $\alpha_{1},\dots,\alpha_{n}$ denote the roots of $f(X)$.

Let $k$ denote the fraction field of $A$ (i.e. either $\Q$ or $\C(T)$), and let $K$ denote the splitting field of $f(X)$ over $k$.  We moreover use $\mathcal{O}_{K}$ to denote the ring of integers of $K$, that is $\mathcal{O}_{K}$ denotes the integral closure of $A$ in $K$.  From \eqref{eq:factorization_in_beta} it follows that $\beta_{i}:= x - \alpha_{i}y$ are \textit{$S$-units} in $\mathcal{O}_{K}$, where $S$ denotes the set of prime ideals in $\mathcal{O}_{K}$ that lie above either a prime dividing $m$ or the prime at infinity.  By \textit{Siegel's identity} we moreover find that
\begin{equation*}
-\frac{(\alpha_{2}-\alpha_{3})}{(\alpha_{1}-\alpha_{2})}\frac{\beta_{1}}{\beta_{3}} - \frac{(\alpha_{3}-\alpha_{1})}{(\alpha_{1}-\alpha_{2})}\frac{\beta_{2}}{\beta_{3}} =1.
\end{equation*}
Upon setting $u_{1}:=-\frac{(\alpha_{2}-\alpha_{3})}{(\alpha_{1}-\alpha_{2})}\frac{\beta_{1}}{\beta_{3}}$ and $u_{2}:=- \frac{(\alpha_{3}-\alpha_{1})}{(\alpha_{1}-\alpha_{2})}\frac{\beta_{2}}{\beta_{3}}$, we thus obtain a solution to the \textit{$S$-unit equation}
\begin{equation}\label{eq:S_unit}
u_{1}+u_{2} = 1,
\end{equation}
where $u_{1},u_{2} \in K$ are again $S$-units, where $S$ now moreover includes the finite set of primes in $K$ dividing $(\alpha_{2}-\alpha_{3}), (\alpha_{1}-\alpha_{2})$, or $(\alpha_{3}-\alpha_{1})$.

In the classical setting, one may 
use Baker's method of lower bounds for linear forms in logarithms to obtain an effective upper bound on the height of the possible solutions to such $S$-unit equations. Since each solution $(x,y) \in \Z^{2}$ of the Thue equation $F(X,Y) = m$ corresponds to a pair of $S$-units $(u_{1},u_{2}) \in K^{2}$ satisfying \eqref{eq:S_unit}, one may effectively determine the entire set of solutions to \eqref{eq:Thue_equation}.

\subsection{A $\C(T)$ Strategy for Solving Thue Equations: The ABC Conjecture}

One may alternatively obtain an upper bound on the height of the possible solutions to \eqref{eq:S_unit} by applying an appropriate form of the \textit{ABC conjecture}.  First formulated by Joseph Oesterlé and David Masser in 1985, the ABC conjecture is considered perhaps the most important unsolved problem in Diophantine analysis.  The classical version may be stated as follows: let $a,b,c \in \Z$, such that $a+b=c$, and suppose moreover that $a,b,$ and $c$ are pairwise co-prime.  Then for any $\epsilon > 0$, there exists a constant $M_{\epsilon}$ such that
\begin{equation*}
\max(|a|,|b|,|c|) \leq M_{\epsilon}\prod_{p|abc}p^{1+\epsilon}.
\end{equation*}

Recall that the \textit{height} of any $r \in \Q^{\times}$ is defined to be $H_{\Q}(r):=\max(\log |m|,\log |n|),$  where $r= m/n$ and $(m,n)=1$.  The ABC conjecture may thus be reformulated as follows:

\begin{conjecture}[ABC]\label{conj:ABC}
Fix $\epsilon > 0$ and suppose $u+v = 1$, where $u,v \in \Q$.  Then there exists a constant $m_{\epsilon}$ such that

\begin{equation*}
\max(H_{\Q}(u),H_{\Q}(v)) \leq m_{\epsilon}+(1+\epsilon)\sum_{p|abc}\log p,
\end{equation*}
where $u = a/c$ and $v = b/c$, and where $(a,b,c)=1$.
\end{conjecture}

An effective version of Conjecture \ref{conj:ABC} would provide an immediate means by which to solve equations of the form $u_{1}+u_{2} = 1$, where $u_{1},u_{2} \in \Q$ are $S$-units, for any finite fixed set of primes, $S$.  More generally, an effective version of the $ABC$ conjecture formulated over $K$, where $K$ denotes either a number field or a function field, would enable an effective means by which to compute all solutions to $\eqref{eq:S_unit}$, and thereby solve the Thue equation \eqref{eq:Thue_equation}.

While such a result is currently far out of reach in the classical setting, over function fields the corresponding \textit{ABC Theorem} is true, unconditionally.  In this setting, the appropriate constant $m_{\epsilon}$ may moreover be explicitly computed in terms of $g_{K}$, the \textit{genus} of $K$.  The ABC theorem may thus be used to obtain an effective upper bound for the height of any pair of $S$-units $(u_{1},u_{2}) \in K^{2}$ satisfying \eqref{eq:S_unit}.  As noted in \cite[p.~18]{Mason1984}, the bounds this method produces in the function field setting are comparatively much smaller to those obtained in the classical setting via Baker's method.

\subsection{Structure of Paper}
The remainder of this paper is structured as follows.  Section~\ref{sec:background} provides general background on valuation theory, the ABC Theorem, and discriminants, within the $\C(T)$ setting.  Section \ref{sec:S-Units} establishes certain properties of the forms $F_{\lambda}(X,Y)$ in \eqref{eq:FF_Forms}, as well as the roots $\alpha$ of the polynomial $f_{\lambda}(X) := F_{\lambda}(X,1)$.  Since a solution $(x,y) \in S_{\lambda, \xi}$ corresponds to a unit $x - \alpha y$ in the ring $\C[T][\alpha]$, in Section \ref{sec:unit_structure} we then identify a system of fundamental units for the $\C[T][\alpha]$.  In Section \ref{sec:applying_ABC} we then estimate the genus of $K$, the splitting field of $f_{\lambda}(X)$ over $\C(T)$, and apply the ABC Theorem to obtain a bound on the height of solutions to the corresponding $S$-unit equations.   Finally in \ref{sec:bound_beta_height} we apply these bounds to prove Theorem \ref{thm:main}, where the relevant computational details are then provided in the \nameref{sec:appendix}.
\subsection{Acknowledgements} The authors would like to thank Paul Voutier for suggesting this problem.  Vukusic was funded by the Austrian Science Fund (FWF) under the project I4406, as well as by the Austrian Marshall Plan Foundation with a
Marshall Plan Scholarship.  Waxman was supported by the Czech Science Foundation (GA\v{C}R) grant 17-04703Y, by a Minerva Post-Doctoral Fellowship at the Technische Universit\"at Dresden, and by a Zuckerman Post-Doctoral Fellowship at the University of Haifa.  Ziegler was funded by the Austrian Science Fund (FWF) under the project I4406.  The researchers would also like to thank AIMS Senegal, AIMS Ghana, and AIMS Rwanda for supporting research visits by Faye and Waxman, as well as the University of Salzburg for supporting a visit by Waxman.

\section{Background: Valuations, the ABC Theorem, and Discriminants}\label{sec:background}

\subsection{Valuations on $\C(T)$}\label{sec:valuations}
Let $F$ denote a field.  Recall that $v: F \rightarrow \R \cup \{\infty\}$ is said to be a \textit{valuation} on $F$ if the following properties hold (see e.g. \cite[p. 19]{Cohn1991}):
\begin{enumerate}[label=\textit{\roman*})]
\item $v(a)=\infty$ if and only if $a=0$
\item $v(ab) = v(a)+v(b)$
\item $v(a+b)\geq \min \{v(a),v(b)\}$, and\\ $v(a+b) = \min \{v(a),v(b)\}$\label{property3} whenever $v(a)\neq v(b)$.
\end{enumerate}
We say that two valuations $v_{1}$ and $v_{2}$ are \textit{equivalent} if there exists a constant $c> 0$ such that $v_{1}(f) = c\cdot v_{2}(f)$ for all $f \in F$.  A \textit{place} on $F$ is then an equivalence class of (non-trivial) valuations on $F$.  We denote the set of places on a field $F$ by $M_{F}$.  By abuse of notation we allow $v$ to refer to both a valuation and to its corresponding place.

For $a \in \C$, consider the (discrete) valuation $v_{a}:\C(T) \rightarrow \Z \cup \{\infty\}$ obtained by setting $v_{a}(T-a)=1$.  We moreover consider the \textit{valuation at infinity},
denoted $v_{\infty}$, obtained by setting $v_{\infty}(f) = -\deg(f)$ for any $f \in \C[T]$.  By an analogue of Ostrowski's theorem, we find that $M_{\C(T)} = \{v_{a}:a \in \C \cup \{\infty\}\}$.

A valuation $v$ naturally determines a \textit{norm} via $|a|_{v}:=e^{-v(a)}$.  This in turn induces a metric on $F$, whose completion we denote by $F_{v}$.  Thus, we may naturally extended $v$ to a function $v:F_{v}  \rightarrow \R \cup \{\infty\}$.  Note that the completion of $\C(T)$ with respect to $v_{\infty}$ is the field of formal Laurent series in the variable $1/T$, namely
\[\C((1/T)):=\left\{\sum_{n \geq n_{0}}a_{n}T^{-n}: n_{0} \in \Z, a_{i} \in \C, a_{n_{0}}\neq 0\right\} \cup \{0\}.\]
For any $z = \sum_{n \geq n_{0}}a_{n}T^{-n}\in \C((1/T))$ as above, we then find that $v_{\infty}(z) = n_{0}$.

Let $K/\C(T)$ denote a finite algebraic extension of degree $n$, and let $\OO_K \subseteq K$ denote the integral closure of $\C[T]$ in $K$.  To any prime ideal $\p \subseteq \mathcal{O}_{K}$ one may associate a valuation on $K$ as follows.  For any $f \in K$, we consider the principal (fractional) ideal 
\[(f)= \prod_{\p} \p^{w_{\p}(f)}.\]
Then the map $w_{\p}: f \mapsto w_{\p}(f)$ defines a valuation on $K$.

For $a \in \C$, let $(T-a)\mathcal{O}_{K}$ denote the principal ideal in $\mathcal{O}_{K}$ generated by $(T-a$), and write $(T-a)\mathcal{O}_{K} = \p_{1}^{e_{1}}\cdots \p_{g}^{e_{g}}$, where $\p_{1},\dots, \p_{g} \subseteq \mathcal{O}_{K}$ denote prime ideals.  The scaled valuation $w'_{\p_{i}} = \frac{1}{e_{i}}w_{\p_{i}}$ extends $v_{a}$ to a valuation on $K$, and we say that the place $w_{\p_{i}}$ \textit{lies above} the place $v_{a}$.  Any place $w \in M_{K}$ lying above $v_{a}$, where $a \in \C$, is referred to as a \textit{finite place} on $K$.

When $a=\infty$, we instead consider the ring $\C[1/T]$, and let $\mathcal{O}'_{K}$
denote its integral closure in $K$.  As above, we may factor $\frac{1}{T}\mathcal{O}'_{K} = \p_{1}^{e_{1}}\cdots\p_{g}^{e_{g}}$ into prime ideals in $\mathcal{O}'_{K}$.  Each such prime ideal $\p_{i}$ corresponds to a place $w_{i}\in M_{K}$  which extends $v_{\infty}$ to a valuation on $K$ (up to scaling). We say that the places $w_{1},\dots,w_{g}$ \textit{lie above} $v_{\infty}$ and refer to these as the  \textit{infinite places} on $K$.  Every place $w \in M_{K}$ is found to lie above $v_{a}$ for some $a \in \C \cup \{\infty\}$.

Each $e_{i} \in \N$ above is referred to as the \textit{ramification index} of the corresponding prime $\p_{i}$.   The prime $(T-a)\C[T]$ (resp.\ the prime $\frac{1}{T}\C[1/T]$) is said to \textit{ramify} in $K$ whenever $e_{i} > 1$ for some $i$.  We moreover find that $e_{1}+\dots + e_{g} = n$, and in the particular case that $K/\C(T)$ is Galois, we have that $e:=e_{1}=\dots = e_{g}$, i.e. that $eg=n$.

The \textit{product formula} states that
\[
	\sum_{w\in M_K} w(f) = 0 
	\quad \text{for any } f\in K.
\]
In particular, if $\mu \in \mathcal{O}_{K}^{\times}$ is a unit, then $w(\mu)=0$ at any finite place $w \in M_{K}$, from which it follows that
\begin{equation}\label{eq:unit_product_formula}
 \sum_{w| v_{\infty}} w(\mu) = 0  \quad \text{for any } \mu \in \mathcal{O}_{K}^{\times}.
\end{equation}
We moreover find that $w(\mu)=0$ at all $w \in M_{K}$ if and only if $\mu \in \C^{\times}$.

\subsection{The $\C(T)$ ABC Theorem}
Let $K$ denote a finite algebraic extension of $\C(T)$.  Recall that the \textit{height} of an element $f\in K^{\times}$ is defined to be
\[
H_{K}(f):=-\sum_{w\in M_K}\min(0,w(f)).
%=\sum_{w\in M_K}\max(0,w(f)).
\]

The following theorem, a slight variation of \cite[Ch. 1 Lemma 2]{Mason1984}, provides an explicit upper bound for the height of solutions to an $S$-unit equation.  It may be viewed as a special case of the ABC-theorem for function fields:

\begin{knownthm}[ABC]\label{thm:ABC}
Let $\gamma_1, \gamma_2 \in K$ with $\gamma_1+\gamma_2=1.$  Let $\mathcal{W}$ be a finite set of valuations such that for all $w\notin \mathcal{W}$ we have $w(\gamma_1)=w(\gamma_2) = 0$. Then 
\[
	H_{K}(\gamma_1)
	\leq \max(0,2g_K-2+|\mathcal{W}|),
\]
where $g_K$ is the genus of $K$.
\end{knownthm}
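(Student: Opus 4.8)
The plan is to recast the statement geometrically and apply the Riemann--Hurwitz formula. Let $C$ be the smooth projective curve over $\C$ with function field $K$, so that the places of $K$ are the points of $C$ and $g_K$ is its genus. If $\gamma_1$ is constant then $w(\gamma_1)\in\{0,\infty\}$ for every place $w$, hence $H_K(\gamma_1)=0$ and the inequality is immediate; so I would assume $\gamma_1\notin\C$. Then $\gamma_1$ defines a non-constant morphism $\phi\colon C\to\mathbb{P}^1$, and the first point to record is the identity
\[
d \;:=\; \deg\phi \;=\; [K:\C(\gamma_1)] \;=\; \deg(\gamma_1)_\infty \;=\; -\sum_{w\in M_K}\min(0,w(\gamma_1)) \;=\; H_K(\gamma_1),
\]
where $(\gamma_1)_\infty$ denotes the polar divisor of $\gamma_1$; this is the step that uses the product formula on $K$.

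The second step is to apply Riemann--Hurwitz to $\phi$. Since $\operatorname{char}\C=0$ the cover is tame, so
\[
2g_K-2 \;=\; d\bigl(2g_{\mathbb{P}^1}-2\bigr)+\sum_{P\in C}(e_P-1) \;=\; -2d+\sum_{P\in C}(e_P-1),
\]
with $e_P$ the ramification index of $\phi$ at $P$. I would then keep only the fibres over $0,1,\infty$: over a point $P$ with $\phi(P)=0$ one has $e_P=w_P(\gamma_1)$; over $\phi(P)=\infty$, $e_P=-w_P(\gamma_1)$; and over $\phi(P)=1$, using $\gamma_1-1=-\gamma_2$, $e_P=w_P(\gamma_2)$. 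Because each of the divisors $(\gamma_1)_0$, $(\gamma_2)_0$, $(\gamma_1)_\infty$ has degree $d$ (their divisors have degree $0$, and $\gamma_1,\gamma_2$ share the same poles), these fibres contribute $d-N_0$, $d-N_1$, $d-N_\infty$ to the ramification sum, where $N_0,N_1,N_\infty$ count the \emph{distinct} places $P$ with $w_P(\gamma_1)>0$, with $w_P(\gamma_2)>0$, and with $w_P(\gamma_1)<0$ respectively. Since the remaining fibres contribute non-negatively, Riemann--Hurwitz gives $2g_K-2 \geq -2d+3d-(N_0+N_1+N_\infty)$, i.e.\ $H_K(\gamma_1)=d\leq 2g_K-2+N_0+N_1+N_\infty$.

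Finally I would bound $N_0+N_1+N_\infty$ by $|\mathcal{W}|$. Any place counted by one of these numbers has $w(\gamma_1)\neq 0$ or $w(\gamma_2)\neq 0$, hence lies in $\mathcal{W}$ by hypothesis; and the three conditions are pairwise incompatible, since $w(\gamma_1)>0$ forces $w(\gamma_2)=w(1-\gamma_1)=\min(0,w(\gamma_1))=0$, symmetrically $w(\gamma_2)>0$ forces $w(\gamma_1)=0$, and $w(\gamma_1)>0$ clearly excludes $w(\gamma_1)<0$. Thus $N_0,N_1,N_\infty$ count disjoint subsets of $\mathcal{W}$, whence $N_0+N_1+N_\infty\leq|\mathcal{W}|$ and $H_K(\gamma_1)\leq 2g_K-2+|\mathcal{W}|$; combined with the constant case this yields the asserted bound with the $\max$. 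The argument is short, and the one place that rewards care is the combinatorial bookkeeping of the previous sentence — in particular the claim that no place is counted twice among the three fibres — together with the identification $H_K(\gamma_1)=\deg\phi$ and the tame (char $0$) form of Riemann--Hurwitz. A self-contained alternative that avoids the curve $C$ is Mason's original argument using a derivation of $K/\C$ and the Wronskian of $\gamma_1$ and $\gamma_2$, where $g_K$ enters through $\deg(\operatorname{div}(\omega))=2g_K-2$ for a nonzero differential $\omega$; it is essentially the same computation, and I would present it instead if a purely valuation-theoretic proof were preferred.
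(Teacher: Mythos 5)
Your proof is correct. Each step checks out: for non-constant $\gamma_1$ the identity $H_K(\gamma_1)=\deg(\gamma_1)_\infty=[K:\C(\gamma_1)]=\deg\phi$ is the standard degree formula (valid here since the constant field $\C$ is algebraically closed), the ramification indices over $0$, $1$, $\infty$ are indeed $w_P(\gamma_1)$, $w_P(\gamma_2)$, $-w_P(\gamma_1)$, the three fibres each have weighted size $\deg\phi$, and your disjointness argument (via the ultrametric equality $w(1-\gamma_1)=\min(0,w(\gamma_1))$ when $w(\gamma_1)\neq 0$) correctly shows no place is counted twice, so the fibre cardinalities are bounded by $|\mathcal{W}|$. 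Note, however, that the paper does not prove this statement at all: it is quoted as a known result, a slight variant of Mason's fundamental lemma (Ch.~1, Lemma~2 of Mason's book), whose proof there is the valuation-theoretic one you mention at the end, using a derivation of $K/\C$ and the Wronskian-type quantity attached to $\gamma_1,\gamma_2$, with $2g_K-2$ entering as the degree of a canonical divisor. So your argument is a genuinely independent, self-contained derivation rather than a reconstruction of the paper's source: its advantage is that it only invokes the Riemann--Hurwitz formula, which the paper already records as Theorem~\ref{thm:Riemann-Hurwitz}, and it makes transparent why the bound is governed by the three fibres over $0,1,\infty$; Mason's derivation argument, by contrast, avoids any appeal to the curve and to Riemann--Hurwitz and adapts more readily to the sharper, place-by-place refinements used elsewhere in his book. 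Two cosmetic points: the height is only defined for $\gamma_1\in K^\times$, so the degenerate case $\gamma_1=0$ should be excluded (as the paper implicitly does), and in the constant case it is cleaner to say $w(\gamma_1)=0$ for all $w$ (a nonzero constant has no zeros or poles) than to allow the value $\infty$.
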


The ABC Theorem is stated in terms of the genus, $g_{K}$.  A bound on $g_{K}$ may be obtained using the \textit{Riemann--Hurwitz Formula} (see e.g.\ \cite[Theorem 7.16]{Rosen2002}), which we state in the following special case:

\begin{knownthm}[Riemann--Hurwitz]\label{thm:Riemann-Hurwitz}
Let $K$ denote a finite algebraic extension of $\C(T)$. Then
\[
	2 g_K - 2
	= [K : \C(T)] \cdot (- 2) + \sum_{w\in M_K}(e_w-1),
\]
where $e_w$ denotes the ramification index of $w \in M_K$.
\end{knownthm}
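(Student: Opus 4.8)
The plan is to deduce the formula from the Riemann--Roch theorem, whose relevant consequence is that any canonical divisor $W$ on the function field $K$ satisfies $\deg W = 2g_K - 2$. Since $\C$ has characteristic zero, the extension $K/\C(T)$ is separable, so $dT$ is a nonzero Kähler differential of $K/\C$ and its divisor $(dT)_K$ is canonical. It therefore suffices to prove
\[
\deg (dT)_K = [K:\C(T)]\cdot(-2) + \sum_{w\in M_K}(e_w - 1),
\]
because the left-hand side equals $2g_K-2$.

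First I would record the divisor of $dT$ downstairs. On $\C(T)$ the differential $dT$ is regular and nonvanishing at every finite place $v_a$ (since $T-a$ is a uniformizer and $d(T-a)=dT$), whereas at $v_\infty$ the uniformizer is $1/T$ and $dT = -T^2\,d(1/T)$, so $v_\infty(dT)=-2$. Hence $(dT)_{\C(T)}$ has degree $-2$, consistent with $g_{\C(T)}=0$.

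Next comes the local comparison at each $w\in M_K$ lying above $v\in M_{\C(T)}$. Let $e=e_w$, let $s$ be a uniformizer at $v$ and $s_w$ one at $w$, and write $s = u\,s_w^{\,e}$ with $w(u)=0$. Differentiating gives $ds = \bigl(e\,u\,s_w^{\,e-1} + s_w^{\,e}\,\partial u/\partial s_w\bigr)ds_w$; because $\C$ has characteristic zero the coefficient $e\,u$ is a $w$-unit, so the ramification is tame and $\ord_w(ds/ds_w)=e-1$. Together with $\ord_w(dT/ds) = e\cdot\ord_v(dT/ds)$, which holds since $w$ restricts to $e\cdot v$ on $\C(T)$, this yields $\ord_w(dT) = e\cdot\ord_v(dT) + (e-1)$. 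Summing over all $w$ and using $\sum_{w\mid v} e_w = [K:\C(T)]$ for each $v$ (the residue field extensions are trivial, as $\C$ is algebraically closed), I get
\[
\deg(dT)_K = \Bigl(\sum_v \ord_v(dT)\sum_{w\mid v}e_w\Bigr) + \sum_w (e_w-1) = [K:\C(T)]\cdot(-2) + \sum_w(e_w-1),
\]
and combining this with $\deg(dT)_K = 2g_K-2$ proves the theorem.

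The main obstacle is the local identity $\ord_w(ds/ds_w) = e_w-1$: this is exactly the assertion that in characteristic zero every ramified place contributes its tame different exponent $e_w-1$ and no more. Making it precise requires the chain rule for differentials over a complete discrete valuation ring and the observation that $e\,u$ is nonzero at $w$, which is where characteristic zero is essential (for a prime $p \mid e_w$ the contribution would be strictly larger). The remaining ingredients --- Riemann--Roch, the genus-zero computation on $\C(T)$, and the identity $\sum_{w\mid v}e_w = [K:\C(T)]$ --- are standard. Of course, as the statement's labelling indicates, one may instead simply invoke the general Riemann--Hurwitz theorem (e.g.\ \cite[Theorem 7.16]{Rosen2002}) specialized to $g_{\C(T)}=0$ with all residue degrees equal to one.
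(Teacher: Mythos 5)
Your argument is correct, but it does more work than the paper, which offers no proof at all: Theorem~\ref{thm:Riemann-Hurwitz} is stated there as a known result, obtained by specializing the general Riemann--Hurwitz formula of \cite[Theorem 7.16]{Rosen2002} to a base of genus zero with algebraically closed constant field (so every different exponent is $e_w-1$ by tameness and every residue degree is $1$). What you have written is essentially the standard proof of that cited theorem in this special case: you take the canonical divisor $(dT)_K$, use Riemann--Roch in the form $\deg(dT)_K=2g_K-2$, compute $\ord_v(dT)$ on $\C(T)$ (degree $-2$, i.e.\ $g_{\C(T)}=0$), and establish the local identity $\ord_w(dT)=e_w\ord_v(dT)+(e_w-1)$ via the tame computation $s=u\,s_w^{e}$, $\ord_w(ds/ds_w)=e_w-1$, which is exactly where characteristic zero enters; summing with $\sum_{w\mid v}e_w=[K:\C(T)]$ gives the formula. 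The steps all check out (the local computation is legitimately carried out in the completion $\widehat{\OO}_w\cong\C[[s_w]]$, where $u$ is a unit power series, and the finiteness of the ramified set keeps the sum finite), so your route buys self-containedness at the cost of invoking Riemann--Roch and the theory of differentials, whereas the paper simply outsources the entire statement to Rosen. Either is acceptable for a ``known theorem''; just be aware that if you present your version, the chain-rule manipulation with $du/ds_w$ should be phrased in the completed local ring (or via the different ideal) to be fully rigorous, as you yourself note.
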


\subsection{Discriminants}
Consider a principal ideal domain $A$ with field of fractions $F$.  We now recall several different notions of the \emph{discriminant}.

\begin{definition}
Let $f(X)\in F[X]$ be a monic polynomial of degree $n$, and suppose $f(X)=(X-\alpha_1)\cdots (X-\alpha_n)$, where $\alpha_{1},\dots,\alpha_{n} \in \overline{F}$, the algebraic closure of $F$.  We define the \textit{discriminant} of $f$ to be
\[
	\disc(f):= \prod_{i<j} (\alpha_i - \alpha_j)^2.
\]
\end{definition}

For $A$ and $F$ as above, let $K/F$ denote a finite Galois extension of degree $n$.  Let $\sigma_1, \ldots, \sigma_n$ moreover denote the distinct elements of the Galois group, where we note that $|\textnormal{Gal}(K/F)| =n$, since $K/F$ is Galois.

\begin{definition}
For any $e_1, \ldots, e_n\in K$ we define the \textit{discriminant} of $(e_1, \ldots, e_n)$ to be
\[
	\disc(e_1, \ldots, e_n)	:= (\det (\sigma_i(e_j))_{i,j})^2.
\]
\end{definition}

Since $K/F$ is finite and Galois, it is, in particular, finite and separable, and thus by the primitive element theorem we may write $K = F(\alpha)$, for some $\alpha \in K$.  Let $f \in F[X]$ denote the minimal polynomial of $\alpha$, and write $f(X)=(X-\alpha_{1})\cdots (X-\alpha_{n})$.  Since $K/F$ is Galois, every irreducible polynomial $f \in F[X]$ with a root in $K$ splits over $K$ and is separable.  It follows that $\alpha_{1},\dots, \alpha_{n}$ all lie in $K$ and are distinct.

For each $\sigma \in \textnormal{Gal}(K/F),$ we find that $f(\sigma(\alpha))= \sigma(f(\alpha))=0,$ and therefore $\sigma(\alpha)$ is also a root of $f(X)$.  
Note that every $\sigma$ is determined uniquely by the value of $\sigma(\alpha)$, and thus $\sigma_{i}(\alpha) \neq \sigma_{j}(\alpha)$ for $i \neq j$.
Since $|\textnormal{Gal}(K/F)| = [K:F]=\textnormal{deg}(f)=n$, we may in fact write $\sigma_{i}(\alpha):=\alpha_{i}$ for each $1 \leq i \leq n$.  We thus obtain the following relation:
\begin{align}\label{eq:disc as polynomial}
\begin{split}
\disc(1,\alpha\bb \alpha^{n-1}) = (\det (\sigma_i(\alpha^{j-1}))_{i,j})^2 &=  \prod_{i<j} (\sigma_{i}(\alpha) - \sigma_{j}(\alpha))^2\\
&= \prod_{i<j} (\alpha_i - \alpha_j)^2 = \disc(f).
\end{split}
\end{align}
Here we use the fact that $(\sigma_i(\alpha^{j-1}))_{i,j}=(\sigma_i(\alpha)^{j-1})_{i,j}$ is a Vandermonde matrix, and thus its determinant is equal to $\prod_{i<j} (\sigma_{i}(\alpha) - \sigma_{j}(\alpha))$.

Let $B$ denote the integral closure of $A$ in $K$, and let $e_1\bb e_n \in B$ denote a basis for $K/F$.

\begin{definition}
Consider the free $A$-module
\[M = \left\{\sum_{i = 1}^{n}a_{i}e_{i}: a_{i} \in A\right\} \subseteq B.\]
We define the \textit{discriminant} of $M$, denoted $D_{A}(M)$, to be the principal ideal in $A$ that is generated by $\disc(e_1\bb e_n)$.  The discriminant of the field extension $K/F$ is defined to be
\[
	D_{K/F}:=D_{A}(B).
\]
\end{definition}
Note that, indeed, $\disc(e_1\bb e_n) \in A$, and moreover that $D_{A}(M)$ is well-defined, i.e.\ does not depend on our particular choice $\{e_1\bb e_n\}$ for a basis of $M$.

\begin{knownlem}\label{lem:disc_divides}
Suppose $M'$ be an $A$-submodule of $M$ of the above form.  Then $D_{A}(M)|D_{A}(M')$, i.e.\ $D_{A}(M') \subseteq D_{A}(M)$.
\end{knownlem}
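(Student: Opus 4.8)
The plan is to compare the two modules through a change-of-basis matrix with entries in $A$. Write $M=\left\{\sum_{i=1}^n a_i e_i : a_i\in A\right\}$ and, since $M'$ is of the same form, $M'=\left\{\sum_{j=1}^n a_j e_j' : a_j\in A\right\}$, where $\{e_1\bb e_n\}$ and $\{e_1'\bb e_n'\}$ are both bases of $K/F$ contained in $B$. Because $M'\subseteq M$, each generator $e_j'$ lies in $M$, so we may write
\[
	e_j' = \sum_{i=1}^n c_{ij}\, e_i \qquad \text{with } c_{ij}\in A,
\]
and we set $C:=(c_{ij})_{i,j}\in A^{n\times n}$.

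Next I would feed this relation through the Galois automorphisms. Applying $\sigma_k\in\Gal(K/F)$ to the displayed identity gives $\sigma_k(e_j')=\sum_{i=1}^n c_{ij}\,\sigma_k(e_i)$, which in matrix form reads $\bigl(\sigma_k(e_j')\bigr)_{k,j} = \bigl(\sigma_k(e_i)\bigr)_{k,i}\cdot C$. Taking determinants, squaring, and invoking Definition 1B yields
\[
	\disc(e_1'\bb e_n') = (\det C)^2 \cdot \disc(e_1\bb e_n).
\]

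Finally, since $\det C\in A$, the element $\disc(e_1'\bb e_n')$ lies in the principal ideal of $A$ generated by $\disc(e_1\bb e_n)$; that is, $D_{A}(M')\subseteq D_{A}(M)$, equivalently $D_{A}(M)\mid D_{A}(M')$. This conclusion is independent of the chosen bases of $M$ and $M'$ precisely because, as already observed after Definition 1C, the discriminant ideal does not depend on the choice of basis.

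I do not anticipate a genuine obstacle here; the two points that require care are (i) that the transition coefficients $c_{ij}$ lie in $A$ and not merely in $F$ — which is exactly the content of the inclusion $M'\subseteq M$ — and (ii) keeping the row/column conventions in the matrix identity $\bigl(\sigma_k(e_j')\bigr)_{k,j}=\bigl(\sigma_k(e_i)\bigr)_{k,i}\,C$ consistent with Definition 1B so that the determinant factors in the stated way.
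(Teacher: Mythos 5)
Your proof is correct and follows essentially the same route as the paper: expressing the generators $e_j'$ of $M'$ in terms of the $e_i$ via a matrix $C\in A^{n\times n}$ and using $\disc(e_1'\bb e_n')=(\det C)^2\disc(e_1\bb e_n)$ to conclude $D_{A}(M')\subseteq D_{A}(M)$. Your version merely spells out the intermediate step of applying the Galois embeddings to justify the determinant identity, which the paper leaves implicit.
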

\begin{proof}
Note that $D_{A}(M')$ is generated by some $\disc(e_1'\bb e_n'),$ where $e_1'\bb e_n' \in M' \subseteq M$.  In particular, we may write $(e_1' \bb e_n')=(e_1\bb e_n)\cdot P$ for some $P\in A^{n\times n}$.  Thus $\disc(e_1'\bb e_n')=(\det P)^2 \disc(e_1\bb e_n) \in D_{A}(M)$, and therefore $D_{A}(M')\subseteq D_{A}(M)$, as desired.
\end{proof}

In subsequent computations we will make use of the following important fact about discriminants. For a proof (in a more general setting) see e.g.\ \cite[Chapter III, Corollary 2.12]{Neukirch1999}.

\begin{knownlem}\label{lem:ramifiedPrimesDiscriminant}
A prime $\p \subset A$ is ramified in $B$ if and only if $\p$ divides $D_{K/F}$.
\end{knownlem}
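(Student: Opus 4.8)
The plan is to convert the statement into a linear-algebra fact about the trace form of the finite-dimensional algebra $\bar B := B/\p B$ obtained by reducing $B$ modulo $\p$: first re-express the discriminant through the trace form, then reduce everything modulo $\p$, and finally show that the trace form of $\bar B$ is degenerate exactly when $\p$ is ramified in $B$.

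To begin, I would record the classical reformulation of Definition~1B in terms of the trace form. Setting $M := (\sigma_i(e_j))_{i,j}$, one computes $M^{\mathsf T}M = \bigl(\sum_k \sigma_k(e_i)\sigma_k(e_j)\bigr)_{i,j} = \bigl(\operatorname{Tr}_{K/F}(e_ie_j)\bigr)_{i,j}$, so that $\disc(e_1,\dots,e_n) = (\det M)^2 = \det\bigl(\operatorname{Tr}_{K/F}(e_ie_j)\bigr)_{i,j}$ for any $e_1,\dots,e_n\in B$. Next, since $A$ is a principal ideal domain and $K/F$ is separable, $B$ is a free $A$-module of rank $n$ (nondegeneracy of the trace pairing on $K$ traps $B$ between a lattice $\bigoplus A\omega_i$ and its dual lattice, forcing $B$ to be finitely generated and torsion-free, hence free). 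Fixing an $A$-basis $\omega_1,\dots,\omega_n$ of $B$ we obtain $D_{K/F} = \bigl(\det(\operatorname{Tr}_{K/F}(\omega_i\omega_j))\bigr)$. I would then reduce modulo $\p$: with $\bar A := A/\p$, a field, and $\bar B := B/\p B = B\otimes_A\bar A$, the classes $\bar\omega_1,\dots,\bar\omega_n$ form an $\bar A$-basis of $\bar B$, and because the trace form is compatible with this base change, the image of $\det(\operatorname{Tr}_{K/F}(\omega_i\omega_j))$ in $\bar A$ equals $\det(\operatorname{Tr}_{\bar B/\bar A}(\bar\omega_i\bar\omega_j))$. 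Hence $\p \mid D_{K/F}$ if and only if the bilinear form $(x,y)\mapsto\operatorname{Tr}_{\bar B/\bar A}(xy)$ on $\bar B$ is degenerate.

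The heart of the proof — and the step I expect to require the most care — is the equivalence ``trace form on $\bar B$ degenerate $\iff$ $\p$ ramifies''. Writing $\p B = \q_1^{e_1}\cdots\q_g^{e_g}$, the Chinese Remainder Theorem gives $\bar B \cong \prod_{i=1}^g B/\q_i^{e_i}$, and the trace form of $\bar B$ is the orthogonal direct sum of the trace forms of the factors (the regular representation is block diagonal), so it suffices to analyze each factor. If some $e_i\ge 2$, choose $x\in\q_i\setminus\q_i^{e_i}$; then $\bar x\ne 0$ in $B/\q_i^{e_i}$ while $\bar x\bar y$ is nilpotent for every $y$, so its multiplication operator is nilpotent and $\operatorname{Tr}(\bar x\bar y)=0$ — thus $\bar x$ lies in the radical of the form and that factor is degenerate. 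If $e_i=1$, the factor is the residue field extension $(B/\q_i)/\bar A$, which here is trivial ($\bar A=\C$ and $B/\q_i$ is a finite, hence trivial, extension of the algebraically closed field $\C$), so its trace form is the nondegenerate form $(x,y)\mapsto xy$ on $\C$. Combining the factors, the trace form on $\bar B$ is degenerate precisely when some $e_i\ge 2$, i.e.\ precisely when $\p$ is ramified in $B$.

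Some remarks on carrying this out: the only genuinely delicate points are the two facts about trace forms of algebras over a field — that a nonzero nilpotent element lies in the radical, and that a finite separable field extension has nondegenerate trace form — and over $\C[T]$ and $\C[1/T]$ the latter is automatic because all residue fields coincide with $\C$. In a general principal ideal domain one would additionally have to exclude inseparable residue extensions (for which the trace form vanishes identically), a complication absent here. An equivalent route is to localize at $\p$ first, replacing $A$ by the discrete valuation ring $A_\p$ and $B$ by the semilocal principal ideal domain $B_\p$, and then run the same argument over $A_\p$; this is essentially the proof in \cite[Ch.~III, Cor.~2.12]{Neukirch1999}, to which we refer for the complete details.
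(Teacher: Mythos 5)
Your proof is correct; note first that the paper itself offers no argument for this lemma at all — it is stated as a known fact and deferred to \cite[Chapter III, Corollary 2.12]{Neukirch1999}, where it is obtained in greater generality by localizing at $\p$ and passing through the different. Your route is the direct trace-form argument: identify $\disc(e_1,\dots,e_n)$ with $\det\bigl(\operatorname{Tr}_{K/F}(e_ie_j)\bigr)$, use that $B$ is $A$-free (PID plus separability) so that $D_{K/F}$ is generated by one such Gram determinant, reduce modulo $\p$ — legitimate since the trace of the regular representation commutes with the base change $A\to A/\p$ — and then split $B/\p B\cong\prod_i B/\q_i^{e_i}$ by the Chinese Remainder Theorem. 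The dichotomy you draw is the right one: a factor with $e_i\ge 2$ contains a nonzero element whose products are all nilpotent, hence lies in the radical of the trace form, while a factor with $e_i=1$ is a residue field, which in this paper is always $\C$ itself and therefore has nondegenerate trace form; so the Gram determinant vanishes modulo $\p$ exactly when $\p$ ramifies. What your approach buys is a short, self-contained proof tailored to the function-field setting of the paper; what the citation buys is the general statement, and you correctly flag the one hypothesis the general case needs (separability of the residue extensions at the unramified factors, without which the ``divides implies ramified'' direction can fail), a hypothesis that is vacuous here because every residue field of $\C[T]$ or $\C[1/T]$ equals $\C$.
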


\section{A simple quartic family over $\C(T)$}\label{sec:S-Units}
Consider the family of quartic, binary forms
\begin{equation*}
F_\lambda(X,Y):=X^4-\lambda X^3Y-6X^2Y^{2}+ \lambda X Y^{3} +Y^{4},
\end{equation*}
where $\lambda\in \C[T]/ \{\C\}$, and let $\a:= \deg \lambda > 0 $.  Define
\begin{equation*}
f_\lambda(X):=F_{\lambda}(X,1)= X^4-\lambda X^3-6X^2+ \lambda X +1,
\end{equation*}
and note that
\[F_{\lambda}(X,Y) = Y^{4}f_{\lambda}\left(\frac{X}{Y}\right).\]

Let $\overline{\C(T)}$ denote the algebraic closure of $\C(T)$.  For $z \in \overline{\C(T)}\setminus\{0, \pm 1\}$, consider the rational maps

\begin{equation}\label{def of phi}
\phi(z) :=\frac{z-1}{z+1} \hspace{5mm} \phi^{2}(z) = -\frac{1}{z} \hspace{5mm}\phi^{3}(z) = \frac{1+z}{1-z}\hspace{5mm} \phi^{4}(z) = z,
\end{equation}
and note that $z, \phi(z),\phi^{2}(z),\phi^{3}(z)$ are distinct whenever $z \neq \pm i$.  Furthermore, if $\alpha$ is a root of $f_\lambda$, one may check that $f_\lambda\left(\phi(\alpha)\right)=0$, i.e. $\phi(\alpha)$ is also a root of $f_\lambda$.  The four distinct roots of $f_\lambda$ are thus given by $\alpha_{j}:=\phi^{j-1}(\alpha)$ for each $1 \leq j \leq 4$ (upon noting that $\alpha \neq \pm i$).

\begin{lemma}\label{lem:irreducibility_of_f}
Suppose $\deg \lambda > 0$.  Then $f_\lambda(X)$ is irreducible over $\C[T][X]$.
\end{lemma}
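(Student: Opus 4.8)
The plan is to show that $f_\lambda(X) = X^4 - \lambda X^3 - 6X^2 + \lambda X + 1$ has no factorization over $\C[T][X]$ by ruling out, in turn, a linear factor and a quadratic factor; since $f_\lambda$ is monic, Gauss's lemma lets us work with factorizations over $\C(T)[X]$ and then clear denominators, so it suffices to exclude factorizations into monic polynomials over $\C[T]$ (or equivalently rule out roots and quadratic factors over $\C(T)$). First I would exclude a linear factor: a root $x \in \C[T]$ of $f_\lambda$ would satisfy $x^4 - 6x^2 + 1 = \lambda(x^3 - x) = \lambda x(x-1)(x+1)$. Comparing degrees, if $\deg x = d \geq 1$ then the left side has degree $4d$ while the right side has degree $\a + 3d > 4d$ precisely when $\a > d$; one must handle the cases $\a \le d$ separately, but in all cases the leading-coefficient / degree bookkeeping (using that $\a = \deg\lambda \ge 1$) forces a contradiction, and $\deg x = 0$ is impossible since then $f_\lambda(x) = (x^4 - 6x^2+1) - \lambda(x^3 - x)$ is non-constant in $T$ unless $x(x-1)(x+1) = 0$, and none of $x \in \{0, 1, -1\}$ is a root of $X^4 - 6X^2 + 1$.

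Next I would exclude a quadratic factorization $f_\lambda = (X^2 + aX + b)(X^2 + cX + d)$ with $a,b,c,d \in \C[T]$. Expanding and matching coefficients gives $a + c = -\lambda$, $b + d + ac = -6$, $ad + bc = \lambda$, and $bd = 1$. From $bd = 1$ with $b, d \in \C[T]$ we get $b, d \in \C^\times$, and combining the first and third equations, $\lambda(d - b) = ad + bc - (b+d)(a+c)\cdot(\text{something})$ — more directly, adding $ad + bc = \lambda$ and $a+c = -\lambda$ appropriately: from $a + c = -\lambda$ and $ad + bc = \lambda = -(a+c)$ we obtain $a(d+1) + c(b+1) = 0$, i.e. $a(d+1) = -c(b+1)$. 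The plan is to use these relations together with $b + d + ac = -6$ and $bd = 1$ to pin down the constants $b, d$ and deduce that $a, c$ are constants as well, contradicting $a + c = -\lambda$ being non-constant. The key sub-case analysis is whether $d + 1$ and $b + 1$ vanish: if $d = -1$ then $b = -1$ (from $bd=1$), giving $ac = -4$ with $a + c = -\lambda$, so $a, c$ are roots of $Z^2 + \lambda Z - 4 = 0$, forcing $\lambda^2 + 16$ to be a perfect square in $\C[T]$, which is impossible for non-constant $\lambda$ by a degree/parity argument (the leading term of $\lambda^2$ cannot be absorbed). Otherwise $b + 1, d+1 \neq 0$ and $a/c = -(b+1)/(d+1) \in \C^\times$, so $a = \kappa c$ for a constant $\kappa$; then $a + c = -\lambda$ gives $c = -\lambda/(1+\kappa)$ (note $\kappa \ne -1$ since $a+c \ne 0$), and substituting into $b + d + ac = -6$, i.e. $b + d + \kappa c^2 = -6$, forces $c^2$ to be constant, hence $\lambda$ constant — a contradiction.

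The main obstacle I anticipate is the quadratic case, specifically making the coefficient-comparison argument fully watertight across all degenerate sub-cases (e.g. $a = 0$ or $c = 0$, or $\kappa = -1$). One should double-check that when $a = 0$ the relation $a + c = -\lambda$ forces $c = -\lambda$, and then $ad + bc = \lambda$ gives $-b\lambda = \lambda$, so $b = -1$, $d = -1$, and $b + d + ac = -2 \ne -6$, a clean contradiction; the symmetric case $c = 0$ is identical. With these degenerate branches dispatched, the generic argument above closes the proof. A cleaner alternative worth attempting is to reduce modulo a well-chosen place: if $f_\lambda$ factored over $\C[T][X]$, then specializing $T$ at a generic value $t_0$ with $\lambda(t_0) = \mu$ would give a factorization of $f_\mu(X) = X^4 - \mu X^3 - 6X^2 + \mu X + 1$ over $\C$ with factors whose coefficients vary polynomially in $t_0$; but for varying $\mu$ the Galois group of $f_\mu$ over $\C(\mu)$ can be computed directly from the $\phi$-action (it contains the $4$-cycle $\alpha \mapsto \phi(\alpha)$), and transitivity of this cyclic action on the four roots already precludes any factorization over $\C(T)$ — this is arguably the slickest route and I would present it if the action of $\phi$ is verified to generate a transitive (indeed cyclic of order $4$) subgroup, which the excerpt has essentially set up.
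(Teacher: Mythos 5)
Your overall strategy (rule out a linear factor, then a factorization into two quadratics, using that $f_\lambda$ is monic with constant term $1$) is the same as the paper's, and your quadratic case is correct and complete; indeed it treats the degenerate branches ($a=0$ or $c=0$, and $b=d=-1$ via the non-squareness of $\lambda^2+16$ in $\C[T]$) more explicitly than the paper, which settles that case just by noting $b,d\in\C^\times$, $ac=-6-b-d\in\C$, hence $a,c\in\C$ and $\lambda=-(a+c)\in\C$. The genuine gap is in your linear-factor case. From $x^4-6x^2+1=\lambda x(x-1)(x+1)$ with $\deg x=d\ge 1$, the degree comparison $4d$ versus $\a+3d$ only yields a contradiction when $d\ne\a$. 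In the remaining case $d=\a$ the leading coefficients are perfectly compatible: they merely force the leading coefficient of $x$ to equal that of $\lambda$, which is exactly what one expects, since $f_\lambda$ really does have a root $\alpha_4=\lambda+5/\lambda+\cdots$ in $\C((1/T))$ whose polynomial truncation has degree $\a$. So ``leading-coefficient bookkeeping forces a contradiction'' is not true as stated there; an extra input is needed (in effect, that the tail $5/\lambda+\cdots$ is nonzero). The easy fix is already contained in your displayed identity: reducing $x^4-6x^2+1=\lambda x(x-1)(x+1)$ modulo $x$ in $\C[T]$ gives $x\mid 1$, hence $x\in\C^\times$, leaving only the constant case you handled correctly. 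This is essentially the paper's argument, which reads off from the constant term of a factorization $(X-\alpha(T))(X^3+aX^2+bX+c)$ that $\alpha(T)$ is a unit of $\C[T]$.

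Your proposed ``slicker'' alternative does not work: transitivity of the cyclic $\phi$-action on the four roots cannot by itself preclude reducibility. The map $\phi$ is defined over $\C$ and permutes the roots of $f_\lambda$ for \emph{every} $\lambda$, including constant $\lambda$, where $f_\lambda\in\C[X]$ splits into linear factors over $\C\subset\C(T)$; so any argument using only this action would ``prove'' a false statement. (Compare $X^2-1$ over $\Q$ with the rational involution $z\mapsto -z$ acting transitively on $\pm 1$.) Transitivity of the \emph{Galois} action is what is equivalent to irreducibility, and identifying the $\phi$-orbit with a Galois orbit is only legitimate after irreducibility (or at least $[\C(T)(\alpha):\C(T)]=4$) is known — which is precisely where the hypothesis $\deg\lambda>0$ must enter, as in the elementary two-case argument.
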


\begin{proof}
Suppose $f_\lambda(X) \in \C[T][X]$ is reducible.  Then either $f_\lambda(X)$ contains a root $\alpha(T) \in \C[T]$, or $f_\lambda(X)$ factors into two quadratic polynomials.  In the first case, we write $f_\lambda(X) = (X-\alpha(T))(X^{3}+a(T)X^{2}+b(T)X+c(T)),$ where $a(T),b(T),c(T) \in \C[T]$.  In particular, we have $\alpha(T)c(T)=1,$ which implies $\alpha :=\alpha(T) \in \C[T]^{\times} = \C^{\times}.$  It moreover follows from \eqref{def of phi} that $\phi(\alpha), \phi^{2}(\alpha),\phi^{3}(\alpha) \in \C$.  Thus all coefficients $f_{\lambda}$ lie in $\C$. In particular, $\lambda \in \C$, contradicting our initial assumption that
$\deg \lambda > 0$.

In the second case, we write $f_\lambda(X) = (X^{2}+a(T)X+b(T))(X^{2}+c(T)X+d(T)),$ where $a(T),b(T),c(T),d(T) \in \C[T]$.  In particular, we find that $b(T)d(T)=1$, which implies that $b(T),d(T)\in \C[T]^{\times} = \C^{\times}$.  In other words, $f_\lambda(X) = (X^{2}+a(T)X+b)(X^{2}+c(T)X+d),$ where $b, d \in \C^{\times}$.  Equating coefficients of $X^{2}$, we then find that $-6 = a(T)c(T)+b+d$, which again implies $a(T), c(T) \in \C$.  Since all coefficients $f_{\lambda}$ lie in $\C$, it follows, in particular, that $\lambda \in \C$, contradicting our initial assumption.

%Since $\C[T]$ is a unique factorization domain, it follows from Gauss's lemma that $f_\lambda(X)$ is irreducibility over $\C(T)[X]$ if and only $f_\lambda(X)$ is irreducible over $\C(T)[X]$.
\end{proof}
Since $\alpha_{i} = \phi^{i-1}(\alpha) \in \C(T)(\alpha)$ for all $1 \leq i \leq 4$, we find that $K:=\C(T)(\alpha)$ is the splitting field of $f_\lambda$ over $\C(T)$.  In other words, $K$ is a normal extension, which implies $K$ is Galois.  For $\sigma \in \textnormal{Gal}(K/\C(T))$, we moreover note that $f_{\lambda}(\sigma(\alpha)) = \sigma(f_{\lambda}(\alpha))=0$, and therefore $\sigma(\alpha) = \phi^{i}(\alpha)$ for some $1 \leq i \leq 4$. By Lemma~\ref{lem:irreducibility_of_f}, $|\textnormal{Gal}(K/\C(T))| =\deg(f_{\lambda})=4$.  Since $\sigma$ is uniquely determined by the value of $\sigma(\alpha) \in K$, we can define each $\sigma_{1},\sigma_{2},\sigma_{3},\sigma_{4} \in \textnormal{Gal}(K/\C(T))$ by setting $\sigma_{i}(\alpha)=\alpha_{i}$.

%$\alpha_{i+1}=\sigma(\alpha_i)$ where the coefficient $i$ is meant mod 4 and $\sigma(\alpha)=(\alpha-1)/(\alpha+1)$.  

Let $(x,y)\in\C[T]\times \C[T]$ denote some solution to $F_\lambda(X,Y)=\xi$, where $\xi\in \C^{\times}$.  Define 

\begin{equation*}
\beta_i:=x-\alpha_i y
\end{equation*}
and write $\beta:=\beta_1=x-\alpha y.$
Since
\begin{align*}
	F_\lambda(x,y) &= y^{4}f_{\lambda}\left(\frac{x}{y}\right) = y^{4}(x/y-\alpha_1)(x/y-\alpha_2)(x/y-\alpha_3)(x/y-\alpha_4)\\
	&=(x-\alpha_1 y)(x-\alpha_2 y)(x-\alpha_3 y)(x-\alpha_4 y)=\xi,
\end{align*}
the elements $\beta_i=x-y\alpha_i$ are  units in the ring $\C[T][\alpha_1,\alpha_2,\alpha_3,\alpha_4]$.  Conversely, any unit $\beta \in \C[T][\alpha_1,\alpha_2,\alpha_3,\alpha_4]$ of the form $\beta=x - \alpha y$ yields a solution $(x,y)\in S_{\lambda, \xi}$, for some $\xi \in \C^{\times}$.  Thus, finding the solution set $S_{\lambda, \xi}$ for all $\xi \in \C^{\times}$ is equivalent to finding the set of units $\beta \in \C[T][\alpha_1,\alpha_2,\alpha_3,\alpha_4]^{\times}$ of the shape $\beta=x - \alpha y,$ where $x,y \in \C[T]$.  To better understand such units, we begin by noting the following lemma.

\begin{lemma}
Let $\alpha_1,\alpha_2,\alpha_3,\alpha_4$ denote the roots of $f_\lambda(X)$. Then $\C[T][\alpha_1,\alpha_2,\alpha_3,\alpha_4]=\C[T][\alpha_{1}]$.
\end{lemma}
\begin{proof}
It suffices to demonstrate that $\alpha_2,\alpha_3,\alpha_4 \in \C[T][\alpha]=\{A\alpha^3 +B\alpha^2+C\alpha+D: A,B,C,D\in \C[T]\}$, where $\alpha:=\alpha_{1}$.  To show that $\alpha_2 \in \C[T][\alpha]$, we note that $\alpha_2=\phi(\alpha)=(\alpha - 1)/(\alpha +1 )$. Since clearly $\alpha - 1 \in \C[T][\alpha]$, it suffices to demonstrate that $(\alpha+1)^{-1} \in \C[T][\alpha]$.  Let us write
\[(\alpha+1)^{-1} = A\alpha^3 +B\alpha^2+C\alpha+D, \hspace{5mm} A, B, C,D \in \C(T),\]
and note that 
$(\alpha+1)^{-1} \in \C[T][\alpha]$ if and only if $A, B, C,D \in \C[T]$.  We then compute

\begin{align*}
	1 &= (\alpha + 1)(A\alpha^3 +B\alpha^2+C\alpha+D)\\
	&=A \alpha^4 + (A+B)\alpha^3 + (B+C)\alpha^2 + (C+D)\alpha + D\\
	&= A(\lambda \alpha^3 + 6 \alpha^2 - \lambda\alpha -1) + (A+B)\alpha^3 + (B+C)\alpha^2 + (C+D)\alpha + D\\
	&= (A \lambda + A + B) \alpha^3 + (6A + B+C)\alpha^2 + (-\lambda A + C+D)\alpha + (-A+D).
\end{align*}
Comparing coefficients and solving the system of equations
\begin{align*}
	A(\lambda +1) + B = 0, \quad
	6A+B+C = 0, \quad
	-\lambda A + C + D = 0, \quad
	-A + D = 1,
\end{align*}
we get that 
\[
	A= \frac{1}{4}, \quad
	B=\frac{-\lambda-1}{4}, \quad
	C=\frac{\lambda-5}{4},\quad
	D=\frac{5}{4}.
\]
It follows that
\begin{equation}\label{eq:1overAlpha+1}
	\frac{1}{(\alpha+1)} = \frac{1}{4} \left(\alpha^3 -(\lambda+1)\alpha^2 + (\lambda-5)\alpha + 5\right).
\end{equation}

Thus, $\alpha_2 = (\alpha -1)/(\alpha+1)\in \C[T][\alpha]$, and therefore $\C[T][\alpha_{2}]\subseteq \C[T][\alpha]$.  By the exact same argument, we find that 
$\C[T][\alpha_{3}]\subseteq \C[T][\alpha_{2}]$, and also that $\C[T][\alpha_{4}]\subseteq \C[T][\alpha_{3}]$, i.e.\ that $\C[T][\alpha_2,\alpha_3,\alpha_4] \subseteq \C[T][\alpha]$, from
 which the claim then follows.
\end{proof}
\subsection{Computing Laurent Series of $\alpha$} The following is a corollary of Hensel's Lemma:

\begin{knownlem}\label{lem:HenselCor}
If $f(t,X)$ is a polynomial in two variables over a field $k$, and $X=a$ is a simple root of $f(0,X)$, then there is a unique power series $X(t)$ with $X(0)=a$ and $f(t,X(t))=0$ identically.
\end{knownlem}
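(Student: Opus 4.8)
The plan is to construct the power series $X(t) = \sum_{n \geq 0} c_n t^n$ coefficient by coefficient, using the hypothesis that $a$ is a \emph{simple} root of $f(0,X)$ to invert the relevant linear map at each stage. First I would set $c_0 = a$, which forces $X(0) = a$ and $f(0, X(0)) = f(0,a) = 0$. Then I would proceed inductively: suppose $c_0, \ldots, c_{N-1}$ have been determined so that $f(t, X_{N-1}(t)) \equiv 0 \pmod{t^N}$, where $X_{N-1}(t) = \sum_{n=0}^{N-1} c_n t^n$. Writing $X(t) = X_{N-1}(t) + c_N t^N + (\text{higher order})$ and expanding $f(t, X(t))$ as a formal Taylor series in the second variable about $X_{N-1}(t)$, the coefficient of $t^N$ in $f(t, X(t))$ takes the form
\[
\frac{\partial f}{\partial X}(0, a) \cdot c_N + R_N,
\]
where $R_N$ is a polynomial expression in $c_0, \ldots, c_{N-1}$ and in the coefficients of $f$ (all already known at this stage, since the chain-rule contribution of $c_N$ only enters through the linear term). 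Here the key point is that $\frac{\partial f}{\partial X}(0,a) \neq 0$ precisely because $a$ is a simple root of $f(0,X)$; hence we may solve uniquely for $c_N = -R_N / \frac{\partial f}{\partial X}(0,a)$. This both establishes existence and forces uniqueness of each coefficient, so the resulting power series $X(t)$ is the unique one with $X(0) = a$ and $f(t, X(t)) = 0$ identically.

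The main obstacle — really the only subtle point — is bookkeeping the formal Taylor expansion of $f(t, X(t))$ correctly, i.e.\ verifying that the coefficient of $t^N$ genuinely isolates $c_N$ linearly with the nonzero factor $\frac{\partial f}{\partial X}(0,a)$ and that all other contributions $R_N$ involve only the previously determined coefficients. This is where the ``simple root'' hypothesis is used in an essential way; without it the linear coefficient could vanish and the recursion would break down. One clean way to handle this formally is to substitute $X = a + tY$ (or more generally track the expansion in $\mathbb{C}[[t]]$) and apply the formal implicit function theorem, but since the statement is attributed as a known corollary of Hensel's Lemma, I would simply cite the standard reference (e.g.\ the source already invoked in the excerpt) rather than reproduce the full inductive argument. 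In the present paper this lemma is used only to extract the Laurent series expansions of the roots $\alpha_i$ at the place $v_\infty$, so the bare existence-and-uniqueness statement suffices.
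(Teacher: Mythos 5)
Your proposal is correct and ultimately lands exactly where the paper does: the paper's entire proof of this lemma is the citation to \cite[Corollary 7.4]{Eisenbud1995}, and you likewise conclude by deferring to that standard reference. Your preliminary sketch of the coefficient-by-coefficient induction (set $c_0=a$, then solve for each $c_N$ using that $\frac{\partial f}{\partial X}(0,a)\neq 0$ since $a$ is a simple root) is the standard argument behind that corollary and is sound; the only cosmetic caveat is that over an arbitrary field one should phrase the ``Taylor expansion'' via the polynomial identity $f(t,Y+Z)=\sum_{i\geq 0} g_i(t,Y)Z^i$ with $g_1=\partial f/\partial X$ rather than with factorials, which is immaterial for the application here where $k=\C$.
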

\begin{proof}
See \cite[Corollary 7.4]{Eisenbud1995}.
\end{proof}

\begin{lemma}\label{lem:alpha_shape}
The polynomial $f_{\lambda}(X) = X^4 - \lambda X^3 - 6 X^2 + \lambda X + 1$ has four distinct roots in $\C((1/\lambda))$, which take the following shape:
\begin{align*}
\alpha &= 1 - \frac{2}{\lambda} + \frac{2}{\lambda^2} + \frac{8}{\lambda^3} + \dots &\alpha_{2}&=-\frac{1}{\lambda} + \frac{5}{\lambda^3} + \dots \\
\alpha_{3}&=-1 -\frac{2}{\lambda}-\frac{2}{\lambda^2} +\frac{8}{\lambda^3} +\dots &
\alpha_{4}&= \lambda + \frac{5}{\lambda} + \dots.
\end{align*}
\end{lemma}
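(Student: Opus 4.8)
The plan is to apply the Hensel-type result in Lemma~\ref{lem:HenselCor} with the role of the small parameter $t$ played by $1/\lambda$. Concretely, I would substitute $\lambda = 1/t$ into $f_\lambda(X)$ and clear denominators: setting $g(t,X) := t\,f_{1/t}(X) = t X^4 - X^3 - 6 t X^2 + X + t$, one has a polynomial in $\C[t][X]$ with $g(0,X) = -X^3 + X = -X(X-1)(X+1)$. Thus $g(0,X)$ has the three simple roots $X = 0, 1, -1$, and Lemma~\ref{lem:HenselCor} produces three power series $\alpha_2(t), \alpha(t), \alpha_3(t)$ in $t = 1/\lambda$ with these respective constant terms and $g(t,\alpha_i(t)) = 0$ identically; since $g$ and $f_\lambda$ differ only by the unit factor $t = 1/\lambda$, these are genuine roots of $f_\lambda$. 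This accounts for three of the four roots; the fourth, $\alpha_4$, is not of this form because it has a pole at $\lambda = \infty$ (equivalently, $g(0,X)$ has degree $3 < 4$, so one root escapes to infinity as $t \to 0$).

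To capture $\alpha_4$, I would instead look near infinity: write $\alpha_4 = \lambda\,\zeta$ and observe that $\zeta$ should satisfy a polynomial whose leading behavior forces $\zeta \to 1$ as $\lambda \to \infty$. Substituting $X = Y/t$ into $g(t,X)$ (i.e.\ $X = \lambda Y$) and multiplying by $t^3$ gives $h(t,Y) := Y^4 - Y^3 - 6 t^2 Y^2 + t^2 Y + t^4$, with $h(0,Y) = Y^3(Y-1)$. The root $Y = 1$ is simple, so Lemma~\ref{lem:HenselCor} yields a power series $\zeta(t)$ with $\zeta(0) = 1$ and $h(t,\zeta(t)) = 0$; then $\alpha_4 := \zeta(1/\lambda)/(1/\lambda) = \lambda\,\zeta(1/\lambda) = \lambda + O(1/\lambda)$ is the fourth root, lying in $\C((1/\lambda))$ (it has a pole of order $1$, but $\C((1/\lambda))$ is a field containing such elements). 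Distinctness of the four roots over $\C((1/\lambda))$ follows since $\disc(f_\lambda)$ is a nonzero element of $\C[\lambda] \subset \C((1/\lambda))$; alternatively, one can simply note the four constant terms (read off at the appropriate scaling) are pairwise distinct, or invoke the earlier observation that the $\phi^{i-1}(\alpha)$ are distinct for $\alpha \neq \pm i$.

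Finally, to pin down the explicit initial coefficients displayed in the statement, I would substitute an ansatz $\alpha = 1 + a_1 t + a_2 t^2 + a_3 t^3 + \cdots$ into $g(t,\alpha) = 0$ and solve recursively for $a_1, a_2, a_3$ by comparing coefficients of $t^0, t^1, t^2, \ldots$ — this is the routine computation yielding $a_1 = -2$, $a_2 = 2$, $a_3 = 8$ — and similarly for $\alpha_2$ (ansatz $b_1 t + b_2 t^2 + b_3 t^3 + \cdots$, giving $b_1 = -1$, $b_2 = 0$, $b_3 = 5$), for $\alpha_3$ (which, being $\phi^2(\alpha_3) = \phi(\cdot)$-related or simply by parity of $g(t,-X)$ versus $g(-t,X)$, mirrors $\alpha$ with alternating-sign adjustments, giving $-1 - 2t - 2t^2 + 8t^3 + \cdots$), and for $\zeta$ to recover $\alpha_4 = \lambda + 5/\lambda + \cdots$. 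I expect no serious obstacle here: the only mild subtlety is bookkeeping the change of variables $X \leftrightarrow \lambda Y$ so that $\alpha_4$'s pole is handled cleanly, and making sure one states clearly that $\C((1/\lambda))$ — being the completion of $\C(T)$-type field at the infinite place, hence a field — accommodates the element $\alpha_4$ of negative valuation; everything else is a direct application of Lemma~\ref{lem:HenselCor} followed by coefficient extraction.
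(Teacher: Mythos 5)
Your proposal is correct, but it takes a partly different route from the paper. Both proofs run on the same engine: pass to $t=1/\lambda$, clear denominators to get a polynomial whose specialization at $t=0$ is $-X^3+X$, and apply the Hensel-type Lemma~\ref{lem:HenselCor} at the simple root $X=1$, extracting coefficients by comparison. The divergence is in how the remaining three roots are produced. The paper applies Hensel only once (at $X=1$) and then obtains $\alpha_2,\alpha_3,\alpha_4$ as the M\"obius images $\phi(\alpha),\phi^2(\alpha),\phi^3(\alpha)$ from \eqref{def of phi}, expanding the quotients of Laurent series directly (geometric series); this buys two things for free, namely that the four series are automatically distinct and, more importantly, that each displayed expansion is attached to the correct label $\alpha_i=\phi^{i-1}(\alpha)$, which is what the later valuation computations rely on. You instead apply Hensel at each of the three simple roots $0,1,-1$ and handle the fourth root by the rescaling $X=\lambda Y$, $h(t,Y)=Y^4-Y^3-6t^2Y^2+t^2Y+t^4$, $h(0,Y)=Y^3(Y-1)$, followed by one more Hensel application; this is clean, correctly handles the pole of $\alpha_4$ inside $\C((1/\lambda))$, and your distinctness argument (nonvanishing discriminant, or distinct leading behaviors) is fine. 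The one small thing your route leaves implicit is the identification of your four series with the specific roots $\phi^{i-1}(\alpha)$ named in the statement: since the $\alpha_i$ were defined earlier as M\"obius images of $\alpha$, you should add the one-line check that, e.g., $\phi(\alpha)=(\alpha-1)/(\alpha+1)$ has leading term $-1/\lambda$ and $\phi^3(\alpha)$ has leading term $\lambda$, so the series with constant term $0$, $-1$, and the series with a pole are indeed $\alpha_2$, $\alpha_3$, $\alpha_4$ respectively (your parity remark for $\alpha_3$ shows it is \emph{a} root with that expansion, not that it is $\phi^2(\alpha)$); this matching is exactly what the paper's computation of $\phi(\alpha)$, $-1/\alpha$, $-1/\alpha_2$ supplies.
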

\begin{proof}
Note that $f_\lambda(\alpha) = 0$ if and only if $\tilde{f}(1/\lambda,\alpha) = 0$, where
\begin{equation*}
	\tilde{f}\left(\frac{1}{\lambda},X\right)
	:=\frac{1}{\lambda} f_\lambda(X)	
	= \frac{1}{\lambda} X^4 - X^3 - \frac{6}{\lambda} X^2 + X + \frac{1}{\lambda}
	=0.
\end{equation*}
Note further that $-1,0,1$ are each simple roots of $\tilde{f}(0,X)=-X^3+X$.  In particular, $1$ is a simple root of $\tilde{f}(0,X)$.  By Lemma \ref{lem:HenselCor}, there then exists a unique power series of the form $X(1/\lambda)=1 + a_1/\lambda + a_2/\lambda^2 + \dots$, such that

\[\tilde{f}\left(\frac{1}{\lambda},X\left(\frac{1}{\lambda}\right)\right)=0.\]
Equivalently, $X(1/\lambda)$ is a root of $f_\lambda(X)$. Let us call this root $\alpha$, i.e.
\[\alpha = 1 + \frac{a_1}{\lambda} + \frac{a_2}{\lambda^2} + \dots.\]
In order to explicitly compute the coefficients of this expansion, we note that
\begin{align*}
	\frac{1}{\lambda} \left(1 + a_1 \frac{1}{\lambda} + \dots\right)^4 
	&- \left(1 + a_1 \frac{1}{\lambda} + \dots\right)^3 \\
	&- \frac{6}{\lambda} \left(1 + a_1 \frac{1}{\lambda}  + \dots\right)^2 
	+ \left(1 + a_1 \frac{1}{\lambda} + \dots\right) 
	+ \frac{1}{\lambda}
	=0,
\end{align*}
and compare coefficients.  The coefficient of $1/\lambda$ on the left-hand side is equal to $1 -3a_1 -6 +a_1+1$, which upon setting equal to 0, implies $a_1=-2$.  Considering higher powers of $1/\lambda$, we similarly find that
\begin{equation*}
	\alpha = 1 - \frac{2}{\lambda} + \frac{2}{\lambda^2} + \frac{8}{\lambda^3} + \dots.
\end{equation*}

To obtain the Laurent series representations for the other roots of $f_\lambda(X)$, we recall that $1/(1-x)=1+x+x^2+\dots$, and then compute
\begin{align*}
	\alpha_2 
	&= \phi(\alpha)=\frac{\alpha-1}{\alpha+1}
	= \frac{- \frac{2}{\lambda} + \frac{2}{\lambda^2} + \frac{8}{\lambda^3} + \dots}{2 - \frac{2}{\lambda} + \frac{2}{\lambda^2} + \frac{8}{\lambda^3} + \dots}
	= \frac{-\frac{1}{\lambda} + \frac{1}{\lambda^2} + \frac{4}{\lambda^3} + \dots}{1 - \frac{1}{\lambda} + \frac{1}{\lambda^2} + \frac{4}{\lambda^3} + \dots}\\
	&= \left(-\frac{1}{\lambda} + \frac{1}{\lambda^2} + \frac{4}{\lambda^3} + \dots \right) 
	\frac{1}{1-(\frac{1}{\lambda} - \frac{1}{\lambda^2} - \frac{4}{\lambda^3} + \dots)}\\
	&= \left(-\frac{1}{\lambda} + \frac{1}{\lambda^2} + \frac{4}{\lambda^3} + \dots \right) 
	\left( 1 + (\frac{1}{\lambda} - \frac{1}{\lambda^2} - \frac{4}{\lambda^3} + \dots) + (\frac{1}{\lambda} - \frac{1}{\lambda^2} - \frac{4}{\lambda^3} + \dots)^2 + \dots \right)\\
	&= \left(-\frac{1}{\lambda} + \frac{1}{\lambda^2} + \frac{4}{\lambda^3} + \dots \right) 
	\left( 1+ \frac{1}{\lambda} - \frac{5}{\lambda^3} + \dots \right)
	= -\frac{1}{\lambda} + \frac{5}{\lambda^3} + \dots. 	
\end{align*}
The roots $\alpha_{3} = 1/\alpha$ and $\alpha_{4} = -1/\alpha_{2}$ may then be computed similarly.
\end{proof}

Above we explicitly computed the four distinct roots of $f_{\lambda}$ in $\C((1/\lambda))$.  Note that $\C((1/\lambda))$ embeds into $\C((1/T))$, since $\lambda=\lambda_\a T^\a + \dots + \lambda_0$ lies in $\C((1/T))$ and $|1/\lambda|_{v_{\infty}} < 1$.  Thus $f_{\lambda}$ has four distinct roots in $\C((1/T))$, each of which corresponds to a unique embedding $\iota: K \hookrightarrow \C((1/T))$ defined by $\iota_{i}: \alpha \rightarrow \alpha_{i}$ for some $1 \leq i \leq 4$.  Each embedding then induces a valuation $w_{i}: K \rightarrow \Z \cup \{\infty\}$ given by $w_{i}(z) = v_{\infty}(\iota_{i}(z))$ for all $z \in K$.  In particular, each $w_{i}$ extends the valuation $v_{\infty}$ on $\C(T)$, and we will see from the computations below that $w_{1}, w_{2}, w_{3},$ and $w_{4}$ are distinct, i.e.\  that $v_{\infty}$ does not ramify over $K$.

For $z \in K$, we moreover define
\[(z)_{\infty}:=(w_{1}(z),w_{2}(z),w_{3}(z),w_{4}(z)).\]

For any $z \in K$, let $z_{i}:=\sigma_{i}(z)$ for $1 \leq i \leq 4$ denote the conjugates of $z$. Considering $i+j-1$ mod 4, we note that
\[\iota_{j}(\sigma_{i}(\alpha)) = \iota_{j}(\phi^{i-1}(\alpha)) = \phi^{i-1}(\iota_{j}(\alpha))= \phi^{i-1}(\alpha_{j}) = \alpha_{i+j-1}= \iota_{i+j-1}(\alpha),\]
and therefore that in fact $\iota_{j}(\sigma_{i}(z)) = \iota_{i+j-1}(z)$ for all $z \in K$.  We thus find that
\begin{equation*}
w_{j}(z_{i})= v_{\infty}(\iota_{j}(z_{i}))=v_{\infty}(\iota_{j}(\sigma_{i}(z)))=v_{\infty}(\iota_{i+j-1}(z))=w_{i+j-1}(z),
\end{equation*}
and conclude that, for any $i,j \in \{1,2,3,4\}$, the following sets are equal:
\begin{align}\label{eq:sets equal}
\begin{split}
\{w_{1}(z),w_{2}(z),w_{3}(z),w_{4}(z)\}
&=\{w_1(z_i),w_2(z_i),w_3(z_i),w_4(z_i)\}\\
&= \{w_{j}(z_{1}),w_{j}(z_{2}),w_{j}(z_{3}),w_{j}(z_{4})\}.
\end{split}
\end{align}

\section{Unit Structure of $\C[T][\alpha]^\times$}\label{sec:unit_structure}

Next, we wish to find a \textit{system of fundamental units} for $\C[T][\alpha]$.  Note that since $\alpha\alpha_2\alpha_3\alpha_4=1$,
we find, in particular, that $\alpha$ is a unit in $\C[T][\alpha]$.  Similarly, from \eqref{eq:1overAlpha+1} we know that $\alpha+1$ is a unit in $\C[T][\alpha]$. Finally, as $\alpha_{2}$ is a unit, it follows that $\alpha - 1= \alpha_2 (1+\alpha)$ is also a unit.  We wish to show that $\alpha, \alpha + 1,$ and $\alpha -1$ form a fundamental system for $\C[T][\alpha]^{\times}$.  To this end, we proceed by computing the valuations of $\alpha, \alpha+1$, and $\alpha - 1$ at the four places lying above $v_{\infty}$.
\begin{lemma}\label{lem:val_of_fundamental_units}
We have the following valuations:
\begin{align*}
(\alpha)_{\infty} = (0,\a,0,-\a),\quad (\alpha-1)_\infty =(\a,0,0,-\a), \quad 
(\alpha+1)_\infty =(0,0,\a,-\a).
\end{align*}
\end{lemma}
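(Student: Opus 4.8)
The plan is to compute the valuations $w_i$ of each of the three units directly from the Laurent series expansions established in Lemma~\ref{lem:alpha_shape}, together with the symmetry relation \eqref{eq:sets equal} to reduce the amount of explicit computation. Recall that by definition $w_i(z) = v_\infty(\iota_i(z))$, where $\iota_i$ sends $\alpha$ to $\alpha_i$, and that for a nonzero Laurent series $\sum_{n\geq n_0} a_n \lambda^{-n}$ (viewed in $\C((1/T))$ via $\lambda = \lambda_\a T^\a + \cdots$) the valuation $v_\infty$ equals $\a \cdot n_0$, since $v_\infty(1/\lambda) = \a$.

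First I would treat $\alpha$ itself. From Lemma~\ref{lem:alpha_shape}, $\iota_1(\alpha) = \alpha = 1 - 2/\lambda + \cdots$ has leading term $1$, so $w_1(\alpha) = 0$; $\iota_2(\alpha) = \alpha_2 = -1/\lambda + \cdots$ gives $w_2(\alpha) = \a$; $\iota_3(\alpha) = \alpha_3 = -1 + \cdots$ gives $w_3(\alpha) = 0$; and $\iota_4(\alpha) = \alpha_4 = \lambda + \cdots$ gives $w_4(\alpha) = -\a$. Hence $(\alpha)_\infty = (0,\a,0,-\a)$. As a sanity check, the entries sum to $0$, consistent with \eqref{eq:unit_product_formula}. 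Next I would compute $\alpha - 1$: the relevant series are $\alpha_i - 1$ for $i=1,2,3,4$, namely $-2/\lambda + \cdots$ (valuation $\a$), $-1 - 1/\lambda + \cdots$ (valuation $0$), $-2 + \cdots$ (valuation $0$), and $\lambda + \cdots$ (valuation $-\a$), giving $(\alpha-1)_\infty = (\a,0,0,-\a)$. Finally, since we established $\alpha - 1 = \alpha_2(1+\alpha)$, we have $(\alpha+1)_\infty = (\alpha-1)_\infty - (\alpha_2)_\infty$; but $(\alpha_2)_\infty = (\alpha_2(\alpha))_\infty$, and by the shift relation $\iota_j(\sigma_2(z)) = \iota_{j+1}(z)$ preceding \eqref{eq:sets equal}, the tuple $(\alpha_2)_\infty$ is the cyclic shift $(\a,0,-\a,0)$ of $(\alpha)_\infty = (0,\a,0,-\a)$. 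Subtracting, $(\alpha+1)_\infty = (\a,0,0,-\a) - (\a,0,-\a,0) = (0,0,\a,-\a)$, as claimed. Alternatively one could read $w_i(\alpha+1)$ straight off the series $\alpha_i + 1$: $2 - 2/\lambda + \cdots$, $1 - 1/\lambda + \cdots$, $-2/\lambda + \cdots$, $\lambda + \cdots$, with valuations $0,0,\a,-\a$.

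There is no serious obstacle here; the only point requiring a little care is bookkeeping of which embedding $\iota_i$ corresponds to which root $\alpha_i$ and keeping the cyclic-shift structure straight when passing between conjugates — i.e.\ making sure the labelling in \eqref{eq:sets equal} and the indexing $\alpha_i = \phi^{i-1}(\alpha)$ are used consistently. It is also worth noting explicitly, as the text anticipates, that the four tuples of leading exponents are pairwise distinct, which confirms $w_1,\dots,w_4$ are genuinely distinct places and hence that $v_\infty$ is unramified in $K$; this will be needed later. Each displayed valuation tuple sums to zero, in agreement with \eqref{eq:unit_product_formula}, which serves as a final consistency check.
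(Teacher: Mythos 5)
Your proposal is correct and follows essentially the same route as the paper: read off $w_i = v_\infty \circ \iota_i$ from the Laurent expansions of Lemma~\ref{lem:alpha_shape}, using $v_\infty(c/\lambda^n)=n\a$, for $\alpha$, $\alpha-1$, and $\alpha+1$ at each of the four infinite places. Your primary derivation of $(\alpha+1)_\infty$ via the identity $\alpha-1=\alpha_2(\alpha+1)$ and the shift relation $(\alpha_2)_\infty=(\a,0,-\a,0)$ is a harmless minor variation (and equally valid), while your stated alternative of expanding $\alpha_i+1$ directly is exactly what the paper does.
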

\begin{proof}
Since $v_{\infty}(c/\lambda^{n}) = n\a$ for any $c \in \C^{\times}$, it follows from Lemma \ref{lem:alpha_shape} that
\begin{align*}
w_{1}(\alpha) = v_{\infty}(\alpha_{1}) &= v_{\infty}\left(1 - \frac{2}{\lambda} + \frac{2}{\lambda^2} + \frac{8}{\lambda^3} + \dots\right) = v_{\infty}(1) = 0,
\end{align*}
and similarly that
\begin{align*}
w_{2}(\alpha)&=v_\infty(\alpha_2)= 
v_\infty\left(-\frac{1}{\lambda} + \frac{5}{\lambda^3} + \dots\right)=\a\\
w_{3}(\alpha)&=	v_\infty(\alpha_3)
	=v_\infty\left(-1 -\frac{2}{\lambda}-\frac{2}{\lambda^2} +\frac{8}{\lambda^3} +\dots\right) = 0\\
w_{4}(\alpha)&= v_\infty(\alpha_4)
	= v_\infty\left(\lambda + \frac{5}{\lambda} + \dots\right) = -\a.
\end{align*}

from which it follows that $(\alpha)_{\infty} = (0,\a,0,-\a)$.  Moreover,
\begin{equation*}
\begin{aligned}[c]
	\alpha_1 - 1 
	&= - \frac{2}{\lambda} + \frac{2}{\lambda^2} + \frac{8}{\lambda^3} + \dots,\\
	\alpha_2 -1
	&= -1 -\frac{1}{\lambda} + \frac{5}{\lambda^3} + \dots,\\
	\alpha_3 -1
	&=-2 -\frac{2}{\lambda}-\frac{2}{\lambda^2} +\frac{8}{\lambda^3} +\dots,\\
	\alpha_4 -1
	&=\lambda -1 + \frac{5}{\lambda} + \dots.
\end{aligned}
\quad
\begin{aligned}[c]
	\alpha_1 + 1 
	&= 2 - \frac{2}{\lambda} + \frac{2}{\lambda^2} + \frac{8}{\lambda^3} + \dots,\\
	\alpha_2 +1
	&=1 -\frac{1}{\lambda} + \frac{5}{\lambda^3} + \dots,\\
	\alpha_3 +1
	&=-\frac{2}{\lambda}-\frac{2}{\lambda^2} +\frac{8}{\lambda^3} +\dots,\\
	\alpha_4 +1
	&= \lambda +1 + \frac{5}{\lambda} + \dots,
\end{aligned}
\end{equation*}
from which it follows that $(\alpha-1)_\infty=(\a,0,0,-\a)$ and $(\alpha+1)_\infty=(0,0,\a,-\a)$, as desired.

\end{proof}

By Lemma \ref{lem:val_of_fundamental_units}
we see that $(\alpha-1)_\infty$, $(\alpha)_\infty$ and $(\alpha+1)_\infty$, are linearly independent, and therefore that $\alpha$, $\alpha-1$, and $\alpha+1$ are multiplicatively independent.  In other words, for any $r,s,t \in \Z$, we find that
\[\alpha^{r}(\alpha-1)^{s}(\alpha+1)^{t}=1 \Leftrightarrow r,s,t = 0.\]
In fact, we have the following:

\begin{prop}\label{prop:fundSystem}
The units $\alpha-1$, $\alpha$ and $\alpha+1$ form a fundamental system for $\C[T][\alpha]^\times$, namely every $\eps\in \C[T][\alpha]^\times$ can be represented as 
\[
	\eps = \eta (\alpha-1)^r \alpha^s (\alpha+1)^t,
\]
with $\eta \in \C^\times$ and $r,s,t\in \Z$.
\end{prop}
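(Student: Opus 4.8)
\emph{Proof proposal.} First I would determine the rank of $\C[T][\alpha]^\times/\C^\times$. The map $\eps\mapsto(\eps)_\infty$ sends $\C[T][\alpha]^\times$ into $H:=\{v\in\Z^4:v_1+v_2+v_3+v_4=0\}\cong\Z^3$ by \eqref{eq:unit_product_formula}, and its kernel is $\C^\times$, since a unit with $w(\mu)=0$ at every place of $K$ lies in $\C^\times$. Hence $\C[T][\alpha]^\times/\C^\times$ is free abelian of rank $\le 3$. By Lemma~\ref{lem:val_of_fundamental_units} the vectors $(\alpha-1)_\infty,(\alpha)_\infty,(\alpha+1)_\infty$ are $\a$ times a basis of $H$, hence $\Z$-independent, so the rank is exactly $3$ and the subgroup $\Lambda_0$ they span in the image lattice $\Lambda$ equals $\a H$, with $[\Lambda:\Lambda_0]$ dividing $[H:\a H]=\a^3$.

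Next I would reduce the claim $[\Lambda:\Lambda_0]=1$ to a statement about $p$-th powers. If $[\Lambda:\Lambda_0]>1$, pick a prime $p$ dividing it; from $pv\in\a H$ with $v\notin\a H$ one gets $p\mid\a$, and by Cauchy's theorem there is $v\in\Lambda\setminus\Lambda_0$ with $pv\in\Lambda_0$. Lifting $v$ to a unit and dividing off a suitable monomial in $\alpha-1,\alpha,\alpha+1$ produces $\eps\in\C[T][\alpha]^\times$, $\eta\in\C^\times$ and integers $0\le r,s,t<p$, not all zero, with $\eps^{p}=\eta\,(\alpha-1)^r\alpha^s(\alpha+1)^t$. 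So it suffices to prove: for every prime $p\mid\a$ and every such triple, $(\alpha-1)^r\alpha^s(\alpha+1)^t$ is not a $p$-th power of a unit of $\C[T][\alpha]$.

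The crux is this last step, and here one must exploit the explicit structure. Writing $\eps=a_0+a_1\alpha+a_2\alpha^2+a_3\alpha^3$ with $a_i\in\C[T]$ and using $\C[T]=\bigoplus_{k=0}^{\a-1}\C[\lambda]\,T^k$ together with the Laurent expansions of $\alpha,\alpha_2,\alpha_3,\alpha_4$ from Lemma~\ref{lem:alpha_shape} (these lie in $\C((1/\lambda))\subseteq\C((1/T))$, on which $v_\infty$ is $\a$-valued), one finds $w_i(\eps)\equiv -k_i\pmod\a$ for a uniquely determined $k_i\in\{0,\dots,\a-1\}$, while the product formula forces $\a\mid k_1+k_2+k_3+k_4$. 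Feeding in the reduction behaviour of $f_\lambda$ at the primes dividing $\disc f_\lambda=4(\lambda^2+16)^3$ --- at a prime dividing $\lambda-4i$ one has $f_\lambda\equiv(X-i)^4$, and symmetrically for $\lambda+4i$ --- should rule out the displayed relation. An alternative organisation is to descend to the quadratic subfield $K_0=\C(T)(\sqrt{\lambda^2+16})$, over which $f_\lambda=(X^2-uX-1)(X^2-vX-1)$ with $u+v=\lambda$, $uv=-4$, and to compare norms; this turns the $p$-th-power question into one for $u$ inside $\C[T][u]$.

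I expect this last step to be the main obstacle, for a structural reason: the analogue of the Proposition with $\OO_K^\times$ in place of $\C[T][\alpha]^\times$ is in fact \emph{false} in degenerate cases. When $\lambda-4i$ (or $\lambda+4i$) is a square in $\C[T]$ --- for instance $\lambda=(T-c)^2+4i$ --- one checks, via $(\alpha-1)\alpha(\alpha+1)=\alpha_1\alpha_2(\alpha+1)^2$ and the fact that $\alpha_1\alpha_2$ then becomes a square in $K$, that $\OO_K^\times$ contains a unit $\mu$ whose square is a constant multiple of $(\alpha-1)\alpha(\alpha+1)$, with $\mu\notin\C^\times\langle\alpha-1,\alpha,\alpha+1\rangle$. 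The Proposition survives only because in precisely these cases $\C[T][\alpha]$ fails to be integrally closed (exactly at the primes where $\lambda^2+16$ is not squarefree) and such a $\mu$ does not lie in $\C[T][\alpha]$. So the analysis in the crux must be carried out for the order $\C[T][\alpha]$ rather than for its integral closure, which is presumably where the detailed computations of the appendix enter.
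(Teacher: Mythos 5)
Your lattice-theoretic setup is sound and, up to its final step, runs parallel to the paper: the image $\Lambda$ of $\C[T][\alpha]^\times/\C^\times$ under $\eps\mapsto(\eps)_\infty$ lies in $H\cong\Z^3$ by \eqref{eq:unit_product_formula}, the three candidate units span $\a H$ by Lemma~\ref{lem:val_of_fundamental_units}, and the proposition is equivalent to $\Lambda=\a H$; your reduction to showing that $(\alpha-1)^r\alpha^s(\alpha+1)^t$ is never a $p$-th power of a unit for a prime $p\mid\a$ and $0\le r,s,t<p$ not all zero is a correct reformulation. But that reformulated statement is exactly where the proof has to happen, and you leave it unproved. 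The congruence remark ($w_i(\eps)\equiv -k_i\pmod\a$ with $k_i\in\{0,\dots,\a-1\}$) carries no information as stated, since every integer satisfies such a congruence; and the suggestions about reduction at the primes dividing $\lambda^2+16$, or descent to $\C(T)(\sqrt{\lambda^2+16})$, are never developed into an argument --- ``should rule out the displayed relation'' is precisely the content of the proposition. The paper closes this gap with Lemma~\ref{lem:min_ei}: writing an arbitrary unit of the order as $\eps=h_0+h_1\alpha+h_2\alpha^2+h_3\alpha^3$ with $h_i\in\C[T]$, solving for $h_0$ by Cramer's rule, and estimating the relevant determinants via the Laurent expansions of Lemma~\ref{lem:alpha_shape}, one shows that every unit $\eps\notin\C^\times$ satisfies $\min_i w_i(\eps)\le-\a$. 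Given this, one multiplies an arbitrary unit by suitable powers of $\alpha-1,\alpha,\alpha+1$ so that all infinite valuations of the new unit exceed $-\a$, and the lemma forces that unit to be constant; no separate analysis of the index $[\Lambda:\a H]$ is then needed. Some quantitative statement of this kind is the missing idea in your proposal, and without it the proof is not complete.

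Two smaller points. Your instinct that the argument must exploit the order $\C[T][\alpha]$ rather than its integral closure is reasonable --- the paper's Lemma~\ref{lem:min_ei} does use the $\C[T]$-generating set $1,\alpha,\alpha^2,\alpha^3$ in an essential way, together with $h_i\in\C[T]$ --- but your claim that the analogue for $\OO_K^\times$ is ``in fact false'' when $\lambda\mp4i$ is a square is asserted rather than checked (that $\alpha_1\alpha_2$ becomes a square in $K$ needs proof), and nothing in your argument would use it even if true. Finally, the appendix is not where this proposition gets finished: the Sage code there enumerates the exponent triples $(r,s,t)$ satisfying \eqref{eq:upper_bound_on_units} in the proof of Theorem~\ref{thm:main}, after the fundamental system has already been established by the valuation argument described above.
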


In order to prove Proposition \ref{prop:fundSystem}, we first prove the following lemma.

\begin{lemma}\label{lem:min_ei}
Let $\eps \in \C[T][\alpha]^{\times}$.  Then either $\eps \in \C^{\times}$ or $\min\{e_1,e_2,e_3,e_4\}\leq -\a$, where $(\eps)_\infty:=(e_1,e_2,e_3,e_4)$.
\end{lemma}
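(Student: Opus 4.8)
The plan is to pass to the subfield $L:=\C(\lambda)(\alpha)\subseteq K$, where $\C(\lambda)$ is the field generated by the transcendental element $\lambda$. Since $[K:\C(T)]=4$ and $[\C(T):\C(\lambda)]=\a$, a short degree count gives $[L:\C(\lambda)]=4$ and $[K:L]=\a$; moreover $L$ is the splitting field of $f_\lambda$ over $\C(\lambda)$ (the other roots being rational functions of $\alpha$), hence Galois, with automorphisms $\alpha\mapsto\alpha_i$. Writing $v^\lambda_\infty$ for the infinite place of $\C(\lambda)$, Lemma~\ref{lem:alpha_shape} shows $f_\lambda$ splits completely in $\C((1/\lambda))$, so $v^\lambda_\infty$ splits completely in $L$ into four places $u_1,\dots,u_4$, with $u_i$ attached to the embedding $\alpha\mapsto\alpha_i$. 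Since $v_\infty=\a\cdot v^\lambda_\infty$ on $\C(\lambda)$ (because $v_\infty(\lambda)=-\a$) and $v_\infty$ is unramified in $K$, one obtains $w_i(z)=\a\,u_i(z)$ for all $z\in L$; in particular $w_i(L^\times)\subseteq\a\Z$.

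The key step is to show that every unit of $\C[T][\alpha]$ in fact lies in $\C[\lambda][\alpha]^\times$. As $\C[T]$ is free over $\C[\lambda]$ on the basis $1,T,\dots,T^{\a-1}$, the ring $\C[T][\alpha]$ is free over $\C[\lambda][\alpha]$ on the same basis, so write $\eps=\sum_{k=0}^{\a-1}T^k\eps_k$ with $\eps_k\in\C[\lambda][\alpha]$. For each $i$ the summands $\iota_i(T^k\eps_k)=T^k\iota_i(\eps_k)$ have $v_\infty$-orders $\a\,u_i(\eps_k)-k$, which occupy pairwise distinct residue classes modulo $\a$; hence no cancellation occurs and $w_i(\eps)=\min\{\a\,u_i(\eps_k)-k:\eps_k\ne0\}$. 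Now $\eps\in\OO_K^\times$, so the product formula \eqref{eq:unit_product_formula} gives $\sum_{i=1}^4 w_i(\eps)=0$; on the other hand, for each $k$ with $\eps_k\ne0$ one has $\sum_{i=1}^4 u_i(\eps_k)=v^\lambda_\infty\big(N_{L/\C(\lambda)}(\eps_k)\big)\le0$, the norm being a nonzero polynomial in $\lambda$. Therefore $\sum_i w_i(\eps)\le-4k$ for every such $k$, which is compatible with $\sum_i w_i(\eps)=0$ only if $\eps_k=0$ for all $k\ge1$. Thus $\eps=\eps_0\in\C[\lambda][\alpha]$, and the same reasoning applied to $\eps^{-1}$ shows $\eps\in\C[\lambda][\alpha]^\times$.

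To finish, observe that for $\eps\in\C[\lambda][\alpha]^\times$ the norm $N_{L/\C(\lambda)}(\eps)$ lies in $\C[\lambda]^\times=\C^\times$, so $\sum_{i=1}^4 u_i(\eps)=0$. If $\eps\notin\C^\times$, then $\eps$ cannot be a unit at every place of $L$, whereas it is a unit at every finite place of $L$ (being in $\OO_L^\times$); hence $u_i(\eps)\ne0$ for some $i$, and together with $\sum_i u_i(\eps)=0$ this forces $\min_i u_i(\eps)\le-1$. Consequently $\min_i w_i(\eps)=\a\min_i u_i(\eps)\le-\a$, which is the assertion.

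I expect the middle step to be the main obstacle: a priori a unit of $\C[T][\alpha]$ need not descend to $\C[\lambda][\alpha]$, and the product formula over $K$ alone yields only $\min_i w_i(\eps)\le-1$. What makes the descent work is the elementary but decisive observation that the $v_\infty$-orders of the components $T^k\iota_i(\eps_k)$ lie in distinct residue classes mod $\a$, so that the global relation $\sum_i w_i(\eps)=0$ cannot coexist with a nonzero higher $T$-component. The remaining ingredients — the complete splitting of $v_\infty$, the identity $w_i=\a\,u_i$ on $L$, and the norm estimate $\sum_i u_i(\eps_k)\le0$ — are routine once the data of Lemmas~\ref{lem:irreducibility_of_f}, \ref{lem:alpha_shape} and \ref{lem:val_of_fundamental_units} are in place.
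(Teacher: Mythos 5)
Your proof is correct, and it takes a genuinely different route from the paper's. The paper stays entirely inside $K/\C(T)$: after conjugating so that (without loss of generality) $e_2>0$, it writes $\eps=h_0+h_1\alpha+h_2\alpha^2+h_3\alpha^3$ with $h_i\in\C[T]$, extracts $h_0$ by Cramer's rule, estimates the valuations of the Vandermonde determinant and of the numerator determinant from the Laurent expansions, and plays the resulting lower bound for $w_i(h_0)$ against $w_i(h_0)=-\deg h_0\leq 0$; the degenerate case $h_0=0$ then requires a separate two-case analysis on the degrees of $h_1,h_2,h_3$. You instead descend to $L=\C(\lambda)(\alpha)$: the decomposition $\C[T][\alpha]=\bigoplus_{k=0}^{\a-1}T^k\,\C[\lambda][\alpha]$, the observation that the summands' $w_i$-values lie in distinct residue classes modulo $\a$ (so no cancellation can occur), the product formula $\sum_i w_i(\eps)=0$, and the norm bound $\sum_i u_i(\eps_k)\leq 0$ together force $\eps\in\C[\lambda][\alpha]^\times$; then $w_i|_L=\a\,u_i$ plus the fact that an element with valuation zero at every place of $L$ is constant gives the assertion. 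Your route avoids both the determinant computations and the $h_0=0$ case split, makes the constant $\a$ conceptually transparent (it is the ramification index of $\infty$ in $\C(T)/\C(\lambda)$), and yields the stronger structural byproduct $\C[T][\alpha]^\times=\C[\lambda][\alpha]^\times$, which is consistent with Proposition~\ref{prop:fundSystem} since $\alpha$ and $\alpha\pm1$ lie in $\C[\lambda][\alpha]$; the paper's argument, in exchange, is more elementary and self-contained, never leaving $K/\C(T)$. One cosmetic point: your justification of $w_i(z)=\a\,u_i(z)$ for $z\in L$ (``since $v_\infty(\lambda)=-\a$ and $v_\infty$ is unramified in $K$'') is looser than needed; the clean reason is simply that $\iota_i(z)\in\C((1/\lambda))$ and $v_\infty(c/\lambda^n)=n\a$, exactly as the paper observes before Lemma~\ref{lem:val_of_fundamental_units}.
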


\begin{proof}
For $\eps \in \C[T][\alpha]^\times$, let $\eps_{i}:=\sigma_{i}(\eps)$ for $1 \leq i \leq 4$ denote the conjugates of $\eps$.  Since $\eps$ is a unit, by \eqref{eq:unit_product_formula}
we find that $e_1+e_2+e_3+e_4=0$. If $e_1=e_2=e_3=e_4=0$, then $\eps \in \C^\times$ and we are done. Otherwise there exists some $e_{i_{0}}>0$.
By \eqref{eq:sets equal}, we moreover note that
\begin{align*}
\{e_{1},e_{2},e_{3},e_{4}\}&= \{w_{2}(\eps_{1}),w_{2}(\eps_{2}),w_{2}(\eps_{3}),w_{2}(\eps_{4})\},
\end{align*}
and thus there exists some $i$ such that $w_{2}(\eps_{i}) > 0$.  From \eqref{eq:sets equal} it further follows that
\begin{align*}
\{e_{1},e_{2},e_{3},e_{4}\}&=\{w_1(\eps_i),w_2(\eps_i),w_3(\eps_i),w_4(\eps_i)\}
\end{align*}
and thus we may replace $\eps$ by $\eps_{i}$ and assume, without loss of generality, that $e_{2}> 0$.

Since $\eps \in \C[T][\alpha]^\times \subset \C[T][\alpha]$, we can write
\begin{equation*}	\eps_i=h_0+h_1\alpha_i+h_2\alpha_i^2+h_3\alpha_i^3 
	\quad \text{for } i=1,2,3,4,
\end{equation*}
with $h_0,h_1,h_2,h_3\in \C[T]$.  We wish to solve this system of linear equations, and we do so using Cramer's rule, namely that
\[
	h_0 = \frac{\det A_1}{\det A},
\]
where
\[
	A 
	= \begin{pmatrix}
	1 & \alpha_1 & \alpha_1^2 & \alpha_1^3 \\
	1 & \alpha_2 & \alpha_2^2 & \alpha_2^3 \\
	1 & \alpha_3 & \alpha_3^2 & \alpha_3^3 \\
	1 & \alpha_4 & \alpha_4^2 & \alpha_4^3 \\
	\end{pmatrix}
	\quad \text{and} \quad
	A_1
	= \begin{pmatrix}
	\eps_1 & \alpha_1 & \alpha_1^2 & \alpha_1^3 \\
	\eps_2 & \alpha_2 & \alpha_2^2 & \alpha_2^3 \\
	\eps_3 & \alpha_3 & \alpha_3^2 & \alpha_3^3 \\
	\eps_4 & \alpha_4 & \alpha_4^2 & \alpha_4^3 \\
	\end{pmatrix},
\]

The matrix $A$ is a Vandermonde matrix, and therefore

\[\det A =\prod_{1\leq i < j \leq 4} (\alpha_j - \alpha_i)
	= (\alpha_4-\alpha_3)(\alpha_4-\alpha_2)(\alpha_4-\alpha_1)(\alpha_3-\alpha_2)(\alpha_3-\alpha_1)(\alpha_2-\alpha_1).\]
Hence
\begin{align*}
\iota_{1} (\det A)
	&= (\lambda + \dots)(\lambda + \dots)(\lambda + \dots)
		(-1 + \dots)(-2+\dots)(-1+\dots)
	=-2\lambda^3 + \dots,
\end{align*}
from which it follows that $w_1(\det A)=-3\a$. Since $\iota_{k}:\alpha_{i} \mapsto \alpha_{i+k-1}$, we see, moreover, that $\iota_{k}(\det A) = \pm \iota_{1}(\det A)$.  Thus $w_{k}(\det A) = w_{1}(\det A)$ for all $1 \leq k \leq 4$, and we conclude that $(\det A)_\infty  =(-3\a,-3\a,-3\a,-3\a).$

If we compute $\det A_1$, we get that
\begin{align*}
	\det A_1 
	&= \eps_1 \alpha_2 \alpha_3 \alpha_4 (\alpha_2 - \alpha_3)(\alpha_3-\alpha_4)(\alpha_4-\alpha_2)\\
	 &\phantom{=} - \eps_2 \alpha_3 \alpha_4 \alpha_1 (\alpha_3 - \alpha_4)(\alpha_4-\alpha_1)(\alpha_1-\alpha_3)\\ 
	&\phantom{=} + \eps_3 \alpha_4 \alpha_1 \alpha_2 (\alpha_4-\alpha_1)(\alpha_1-\alpha_2)(\alpha_2-\alpha_4)\\
	&\phantom{=} - \eps_4 \alpha_1 \alpha_2 \alpha_3 (\alpha_1-\alpha_2)(\alpha_2-\alpha_3)(\alpha_3-\alpha_1)\\
	&= \delta - \sigma(\delta) + \sigma^2(\delta) - \sigma^3(\delta),
\end{align*}
where
\[
	\delta = \eps_1 \alpha_2 \alpha_3 \alpha_4 (\alpha_2 - \alpha_3)(\alpha_3-\alpha_4)(\alpha_4-\alpha_2).
\]
Since $(\eps)_\infty=(e_1,e_2,e_3,e_4)$, we write $\iota_{1}(\eps_1)= c_1 T^{-e_1} + \dots$, and compute
\begin{align*}
	\iota_{1}(\delta) 
	&= (c_1T^{-e_1} + \dots) (-\frac{1}{\lambda} + \dots) (-1 + \dots)(\lambda + \dots)(1 + \dots)(-\lambda + \dots)(\lambda + \dots)\\
	&= -c_1T^{-e_1} \lambda^2 + \dots,
\end{align*}
so $w_1(\delta)=e_1-2\a$. Similarly, we compute $\iota_{2}(\delta)$, $\iota_{3}(\delta)$ and $\iota_{4}(\delta)$ to obtain $w_2(\delta)$, $w_3(\delta)$ and $w_4(\delta)$. We conclude that $(\delta)_\infty=(e_1-2\a,e_2-3\a,e_3-2\a,e_4+\a)$. 

Now for any $i=1,2,3,4$,
\begin{align*}
	w_i(\det A_1)
	=w_i(\delta - \sigma(\delta) + \sigma^2(\delta) - \sigma^3(\delta))
	&\geq \min\{w_i(\delta),w_i(\sigma(\delta)),w_i(\sigma^{2}(\delta)),w_i(\sigma^{3}(\delta))\}\\
	&= \min\{	e_1-2\a,e_2-3\a,e_3-2\a,e_4+\a\},
\end{align*}
where the last step follows from \eqref{eq:sets equal}.  Dividing by $\det A$ we obtain
\begin{align*}
	w_i(h_0)
	= w_i\left(\frac{\det A_1}{\det A}\right)
	= w_i(\det A_1) - w_i(\det A)
	&\geq \min\{e_1-2\a,e_2-3\a,e_3-2\a,e_4+\a\} + 3\a\\
	&= \min \{e_1+\a,e_2,e_3+\a,e_4+4\a\}.
\end{align*}
Recall that $h_0\in \C[T]$, and assume for the moment that $h_{0} \neq 0$.  Then $w_i(h_0)=v_{\infty}(h_{0})=-\deg h_0 \leq 0$ for $i=1,2,3,4$, so $\min \{e_1+\a,e_2,e_3+\a,e_4+4\a\}\leq 0$. Since we assume $e_2>0$, it follows that $\min\{e_1+\a,e_3+\a,e_4+4\a\}\leq 0$, which implies $\min \{e_1,e_3,e_4\}\leq -\a$.

Finally, we consider the case $h_0 = 0$, i.e. we assume that
\[
	\eps 
	= \alpha (h_1 + h_2 \alpha + h_3 \alpha^2),
\]
where $h_1, h_2, h_3 \in \C[T]$.
We consider two subcases, based on whether or not the following chain of equalities holds:
\begin{equation}\label{eq:degrees-specialCase}
	\deg h_1 
	= \deg h_2 + \a
	= \deg h_3 + 2 \a.
\end{equation}
Suppose first that \eqref{eq:degrees-specialCase} does not hold.   Then
\[
	w_4(\eps)
	= w_4(\alpha) + w_4(h_1 + h_2 \alpha + h_3 \alpha^2)
	= - \a 	+ w_4(h_1 + h_2 \alpha + h_3 \alpha^2)
	\leq - \a,
\]
and we are done. 
Note that for the last inequality we used the following two facts: 
First, for any valuation $v$ and any elements $a,b,c$ we have $v(a+b+c)\leq \max\{v(a),v(b),b(c)\}$ so long as $v(a),v(b),v(c)$ are not all equal. Second, $w_4(h_1) = -\deg h_1$, $w_4(h_2 \alpha) = - \deg h_2 - \a$, $w_4(h_3 \alpha^2) = - \deg h_3 -2 \a$ are each $\leq 0$
and the three numbers are not all equal, since we are assuming that \eqref{eq:degrees-specialCase} does not hold.

Suppose next that \eqref{eq:degrees-specialCase} does hold. Then
\[
	w_1(\eps)
	= w_1(\alpha) + w_1(h_1 + h_2 \alpha + h_3 \alpha^2)
	= 0 + w_1(h_1 + h_2 \alpha + h_3 \alpha^2).
\]
By \eqref{eq:degrees-specialCase} we have 
$w_1(h_1)= -\deg h_1 = -\deg h_3 - 2\a$, 
$w_1(h_2 \alpha) =  - \deg h_2 = -\deg h_3 - \a$,
$w_1(h_3 \alpha^2) = - \deg h_3$, which are all distinct. Thus we obtain
\begin{align*}
	w_1(\eps)
	&= w_1(h_1 + h_2 \alpha + h_3 \alpha^2)
	= \min \{ -\deg h_3 - 2\a, -\deg h_3 - \a, - \deg h_3 \}\\
	&= -\deg h_3 - 2\a
	\leq -\a,
\end{align*}
and we are done.
\end{proof}

\begin{proof}[Proof of Proposition \ref{prop:fundSystem}]
Let $\eps\in\C[T][\alpha]^\times$ be an arbitrary unit. Recall that $(\alpha-1)_\infty=(\a,0,0,-\a)$, $(\alpha)_\infty=(0,\a,0,-\a)$ and $(\alpha+1)_\infty=(0,0,\a,-\a)$. Clearly, we can multiply $\eps$ with powers of $\alpha-1,\alpha,\alpha+1$ to obtain a new unit of the form $\eps'=\eps (\alpha-1)^r \alpha^s (\alpha+1)^t$, where 
$(\eps')_{\infty}=(e_1', e_2',e_3',e_4')$ is such that 
$\a \leq e_1' < 2\a$ and $-\a<e_2',e_3'\leq 0$. Since $e_1'+e_2'+e_3'+e_4'=0$, we have $e_4'=-e_1'-e_2'-e_3'$ and therefore $e_4'>-\a$. It follows that $\min\{e_1',e_2',e_3',e_4'\}>-\a$. But then Lemma~\ref{lem:min_ei} implies that $\eps'\in \C^\times$, so
\[
	\eps = \eps'(\alpha-1)^{-r} \alpha^{-s} (\alpha+1)^{-t}, \quad \eps' \in \C^{\times},
\]
as desired.
\end{proof}

\section{Applying the ABC Theorem}\label{sec:applying_ABC}

\subsection{Computing $D_{K/\C(T)}$ and Estimating $g_{K}$}

\begin{lemma}\label{lem:bound on rk}
Let $r_{K}$ denote the number of places $v \in M_{\C(T)}$ which ramify in $K$.  Then $r_{K} \leq 2 \a$.
\end{lemma}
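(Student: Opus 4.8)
The plan is to bound $r_K$ by the number of distinct prime divisors of the discriminant $\disc(f_\lambda)$, after separately checking that the infinite place does not ramify.

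First I would observe that $v_\infty$ is unramified in $K$. By Lemma~\ref{lem:val_of_fundamental_units} we have $(\alpha)_\infty=(0,\a,0,-\a)$ and $(\alpha-1)_\infty=(\a,0,0,-\a)$, and since $\a>0$ the four pairs $\big(w_i(\alpha),w_i(\alpha-1)\big)$, $i=1,2,3,4$, are pairwise distinct; hence the places $w_1,\dots,w_4$ lying above $v_\infty$ are pairwise distinct. Since $e_1+\dots+e_g=[K:\C(T)]=4$ with each $e_i\ge 1$, the existence of four distinct places above $v_\infty$ forces $g=4$ and $e_1=\dots=e_4=1$. It thus remains to bound the number of finite places $v_a$, $a\in\C$, that ramify in $K$.

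For the finite places I would apply the discriminant machinery of Section~\ref{sec:background}. Since $\alpha$ is a root of the monic polynomial $f_\lambda\in\C[T][X]$, it is integral over $\C[T]$, so $M:=\C[T][\alpha]=\C[T]\oplus\C[T]\alpha\oplus\C[T]\alpha^2\oplus\C[T]\alpha^3$ is a rank-$4$ free $\C[T]$-submodule of $\OO_K$. By Lemma~\ref{lem:disc_divides}, $D_{K/\C(T)}=D_{\C[T]}(\OO_K)$ divides $D_{\C[T]}(M)$, which by \eqref{eq:disc as polynomial} is the ideal generated by $\disc(f_\lambda)$; and by Lemma~\ref{lem:ramifiedPrimesDiscriminant} a finite prime $(T-a)\C[T]$ ramifies in $\OO_K$ only if it divides $\disc(f_\lambda)$. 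One now computes — e.g.\ as the resultant $\mathrm{Res}(f_\lambda,f_\lambda')$, or by evaluating $\prod_{i<j}(\alpha_i-\alpha_j)^2$ using $\alpha_i=\phi^{i-1}(\alpha)$ together with the relation $(\alpha^2+1)^2=\lambda\alpha(\alpha^2-1)+8\alpha^2$ coming from $f_\lambda(\alpha)=0$, or via the norms $N(\alpha)=1$, $N(\alpha\pm1)=-4$, $N(\alpha^2+1)=4(\lambda^2+16)$ — that
\[
\disc(f_\lambda)=4(\lambda^2+16)^3 .
\]
Since $4\in\C^\times$, a prime $(T-a)\C[T]$ divides $\disc(f_\lambda)$ if and only if it divides $\lambda^2+16$, and $\lambda^2+16\in\C[T]$ has degree $2\a$, hence at most $2\a$ distinct monic linear (i.e.\ prime) factors. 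So at most $2\a$ finite places ramify in $K$, and combined with the previous paragraph this gives $r_K\le 2\a$.

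The one non-formal step is the identity $\disc(f_\lambda)=4(\lambda^2+16)^3$; everything else is a direct assembly of Lemmas~\ref{lem:disc_divides}, \ref{lem:ramifiedPrimesDiscriminant}, and \ref{lem:val_of_fundamental_units}. This identity is exactly what sharpens the naive bound — the $T$-degree of $\disc(f_\lambda)$ is $6\a$ — down to $2\a$, so I expect it to be the crux: either carried out by a routine but slightly lengthy computation, or quoted from an earlier part of the paper where the discriminant of $f_\lambda$ is recorded.
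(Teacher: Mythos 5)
Your proof is correct and follows essentially the same route as the paper: bound the ramified finite places via $D_{K/\C(T)} \mid \disc(f_\lambda)\C[T] = 4(\lambda^2+16)^3\C[T]$ (using Lemmas~\ref{lem:disc_divides} and \ref{lem:ramifiedPrimesDiscriminant}), note $\deg(\lambda^2+16)=2\a$, and observe that $v_\infty$ is unramified. The only cosmetic difference is that you justify the unramifiedness of $v_\infty$ explicitly from the valuations in Lemma~\ref{lem:val_of_fundamental_units}, whereas the paper appeals to the earlier observation that the Laurent expansions make $w_1,\dots,w_4$ distinct; the discriminant identity $\disc(f_\lambda)=4(\lambda^2+16)^3$ you flag as the crux is likewise simply asserted in the paper's proof.
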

\begin{proof}
Since $\alpha$ is integral over $\C[T]$, we have that $\C[T][\alpha]\subseteq \OO_K$, where  $\OO_K$ denotes the integral closure of $\C[T]$ in $K$.  Upon noting that 
$\C[T][\alpha]$ is a $\C[T]$-module with basis $\{1,\alpha,\alpha^2,\alpha^3\}$, it follows from Lemma \ref{lem:disc_divides} that the discriminant $D_{K/\C(T)}$ divides the discriminant $D_{\C[T]}(\C[T][\alpha])$.  By \eqref{eq:disc as polynomial} we then compute
\[D_{\C[T]}(\C[T][\alpha])=\disc(1,\alpha,\alpha^{2},\alpha^{3})\C[T] = \disc(f_\lambda)\C[T]=4(\lambda^2 + 16)^3 \C[T].\]
By Lemma \ref{lem:ramifiedPrimesDiscriminant}, a prime $(T-a) \subset \C[T]$ can only ramify in $K$ if it divides $(\lambda^2 + 16)$, i.e.\  if $a$ is a root of $\lambda^2+16$. Since $\deg \lambda = \a$, there are at most $2\a$ such primes. Since, moreover, we have already seen that $v_{\infty}$ does not ramify, we conclude that there are at most $2\a$ primes that ramify, as desired.
\end{proof}
Now we can use the \nameref{thm:Riemann-Hurwitz} Formula to bound the genus of $K$, which will then be applied in the \nameref{thm:ABC} Theorem.

\begin{lemma}\label{lem:genusK}
Let $r_{K}$ denote the number of places in $\C(T)$ which ramify in $K$, and let $g_{K}$ denote the genus of $K$. Then
\[
	g_K 
	\leq \frac{3}{2} r_{K} - 3
	\leq 3\a - 3.
\]
\end{lemma}
\begin{proof}
Since $[K:\C(T)]=4$ and the ramification index of each ramified prime is at most 4, 
it follows from the \nameref{thm:Riemann-Hurwitz} Formula that
\begin{align*}
	2 g_K - 2
	&= [K : \C(T)] \cdot (- 2) + \sum_{w\in M_K}(e_w-1)\\
&\leq  4(-2) +  r_{K}(4-1),
\end{align*}
which implies $g_K \leq 3r_{K}/2 - 3$.  The second inequality now follows by Lemma \ref{lem:bound on rk}.
\end{proof}

\subsection{Application of the ABC Theorem} 
In what follows, we use the \nameref{conj:ABC} Theorem to first estimate the height 
$(\alpha_{2}-\alpha_{3})\beta_{1}/(\alpha_{3}-\alpha_{1})\beta_{2}$, which we in turn use to bound the height of $\beta$.

\begin{lemma}\label{lem:upperBound-H_K-gammas}
We have that
\begin{equation*}\label{gamma bound}
H_K\left(\frac{(\alpha_{2}-\alpha_{3})\beta_{1}}{(\alpha_{3}-\alpha_{1})\beta_{2}}\right)\leq 10\a -4.
\end{equation*}
\end{lemma}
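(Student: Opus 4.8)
The plan is to exhibit the quantity in the statement as one term of an $S$-unit equation $\gamma_1+\gamma_2=1$ over $K$ and then apply Theorem~\ref{thm:ABC}. The starting point is the linear (``Siegel'') identity
\[
(\alpha_3-\alpha_2)\beta_1+(\alpha_1-\alpha_3)\beta_2+(\alpha_2-\alpha_1)\beta_3=0,
\]
which holds identically: writing $\beta_i=x-\alpha_i y$, the coefficient of $x$ telescopes to $0$, and the coefficient of $-y$ is $\alpha_1\alpha_3-\alpha_1\alpha_2+\alpha_1\alpha_2-\alpha_2\alpha_3+\alpha_2\alpha_3-\alpha_1\alpha_3=0$. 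Dividing by $(\alpha_3-\alpha_1)\beta_2$ and simplifying the middle term to $-1$ gives $-\gamma_1+\gamma_2=1$, where
\[
\gamma_1:=\frac{(\alpha_2-\alpha_3)\beta_1}{(\alpha_3-\alpha_1)\beta_2}
\qquad\text{and}\qquad
\gamma_2:=\frac{(\alpha_2-\alpha_1)\beta_3}{(\alpha_3-\alpha_1)\beta_2}.
\]
Since $w(-\gamma_1)=w(\gamma_1)$ at every place of $K$, we have $H_K(\gamma_1)=H_K(-\gamma_1)$, so it suffices to bound $H_K(-\gamma_1)$ through the $S$-unit equation $(-\gamma_1)+\gamma_2=1$.

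The core of the argument is to identify a finite set $\mathcal{W}\subseteq M_K$ outside of which $w(\gamma_1)=w(\gamma_2)=0$, and to bound $|\mathcal{W}|$. Two observations make this manageable. First, each $\beta_i$ is a unit of $\OO_K$: the $\beta_i$ are integral over $\C[T]$, and their product $\beta_1\beta_2\beta_3\beta_4=F_\lambda(x,y)=\xi$ lies in $\C^\times\subseteq\OO_K^\times$, so each factor is a unit; hence the $\beta_i$ have valuation $0$ at every finite place and contribute to $\mathcal{W}$ only among the infinite places, of which there are $4$ because $v_\infty$ is unramified in $K$. Second, \eqref{def of phi} gives $\alpha_1-\alpha_2=\tfrac{\alpha^2+1}{\alpha+1}$, $\alpha_1-\alpha_3=\tfrac{\alpha^2+1}{\alpha}$, and $\alpha_2-\alpha_3=\tfrac{\alpha^2+1}{\alpha(\alpha+1)}$, and both $\alpha$ and $\alpha+1$ are units (the latter by \eqref{eq:1overAlpha+1}); so each root difference occurring in $\gamma_1,\gamma_2$ is a unit multiple of $\alpha^2+1$. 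Therefore a finite place $w$ at which $\gamma_1$ or $\gamma_2$ is non-zero must divide $\alpha^2+1$, and since $N_{K/\C(T)}(\alpha^2+1)=\prod_{j=1}^{4}(\alpha_j^2+1)=4(\lambda^2+16)$ has $T$-degree $2\a$, such places lie above the $\le 2\a$ roots of $\lambda^2+16$ (equivalently, above the primes dividing $\disc(f_\lambda)=4(\lambda^2+16)^3$; cf.\ the proof of Lemma~\ref{lem:bound on rk}) and there are at most $2\a$ of them. Hence $|\mathcal{W}|\le 2\a+4$, which already gives more than enough room below the claimed value. (In fact the second observation shows $\gamma_1=-\beta_1/((\alpha+1)\beta_2)$ is itself a unit of $\C[T][\alpha]$, so $\mathcal{W}$ could even be taken to be just the $4$ infinite places.)

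Finally I would apply Theorem~\ref{thm:ABC} to $(-\gamma_1)+\gamma_2=1$ with this $\mathcal{W}$, together with the genus bound $g_K\le 3\a-3$ from Lemma~\ref{lem:genusK}:
\[
H_K(\gamma_1)=H_K(-\gamma_1)\le\max\bigl(0,\ 2g_K-2+|\mathcal{W}|\bigr)\le(6\a-8)+(2\a+4)=8\a-4\le 10\a-4,
\]
the $\max$ being dropped since $\a\ge1$. The main obstacle is the middle step — pinning down $\mathcal{W}$ and estimating $|\mathcal{W}|$; this rests on the two structural inputs above (that the $\beta_i$ are genuine units, and that the relevant root differences are unit multiples of $\alpha^2+1$) and on a careful count of the primes of $\OO_K$ lying over the roots of $\lambda^2+16$. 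The remaining ingredients — the Siegel identity, the sign-invariance of the height, and the substitution into the ABC and genus bounds — are routine.
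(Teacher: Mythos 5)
Your argument is correct and reaches the stated bound; it shares the paper's skeleton (Siegel's identity, then Theorem~\ref{thm:ABC} combined with the genus bound of Lemma~\ref{lem:genusK}), but it treats the decisive step --- bounding $|\mathcal{W}|$ --- by a genuinely different and sharper route. The paper only uses that $(\alpha_1-\alpha_2)(\alpha_2-\alpha_3)(\alpha_3-\alpha_1)\beta_1\beta_2\beta_3$ divides $\disc(f_\lambda)=4(\lambda^2+16)^3$, and then counts the places of $K$ lying over the $2\a+1$ places of $\C(T)$ where the discriminant has nonzero valuation (at most $2$ over a ramified place, exactly $4$ over an unramified one), which gives $|\mathcal{W}|\leq 4+8\a-2r_K$ and hence $H_K(\gamma_1)\leq 8\a+r_K-4\leq 10\a-4$. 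You instead exploit the simple-form structure: since $\alpha_1-\alpha_2$, $\alpha_1-\alpha_3$, $\alpha_2-\alpha_3$ are unit multiples of $\alpha^2+1$, the two terms of the unit equation are in fact units, namely $\gamma_1=-\beta_1/\bigl((\alpha+1)\beta_2\bigr)$ and (you should state this too, as it is needed for the same conclusion about $\gamma_2$) $\gamma_2=\alpha\beta_3/\bigl((\alpha+1)\beta_2\bigr)$, so $\mathcal{W}$ can be taken to be just the four infinite places, or at worst those together with the finite places dividing $\alpha^2+1$. For the latter count, note that ``lying above the at most $2\a$ roots of $\lambda^2+16$'' alone would only give $8\a$ places, which would not suffice; the bound $\leq 2\a$ you assert does hold, but it needs the norm argument spelled out: all residue degrees are $1$ over $\C$, so the number of primes with $w(\alpha^2+1)>0$ is at most $\deg N_{K/\C(T)}(\alpha^2+1)=\deg\bigl(4(\lambda^2+16)\bigr)=2\a$. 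Your route yields the sharper bounds $6\a-4$ (unit version) or $8\a-4$, which trivially imply the stated $10\a-4$, and if propagated through Lemmas~\ref{lem:H_K-beta1beta2} and \ref{lem:H_K-bound} would shrink the exponent range in \eqref{eq:upper_bound_on_units}; the paper's discriminant-plus-ramification count is coarser but is the version that survives when the root differences carry no such unit structure.
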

\begin{proof}
By Siegel's identity,
\begin{align*}
	&\beta_1(\alpha_2-\alpha_3) +\beta_2(\alpha_3-\alpha_1)+\beta_3(\alpha_1-\alpha_2)	\\
	&=(x-\alpha_1y)(\alpha_2-\alpha_3) +(x-\alpha_2 y)(\alpha_3-\alpha_1)+(x-\alpha_3y)(\alpha_1-\alpha_2)
	=0,
\end{align*} 
which further implies that
\[-\frac{(\alpha_{2}-\alpha_{3})\beta_{1}}{(\alpha_{3}-\alpha_{1})\beta_{2}} - \frac{(\alpha_{1}-\alpha_{2})\beta_{3}}{(\alpha_{3}-\alpha_{1})\beta_{2}} =1.\]
Applying Theorem~\ref{thm:ABC}, we then obtain that
\begin{equation}\label{eq:Mason bound II}
	H_K\left(\frac{(\alpha_{2}-\alpha_{3})\beta_{1}}{(\alpha_{3}-\alpha_{1})\beta_{2}}\right)\leq \max(0,2g_K-2+|\mathcal{W}|),
\end{equation}
where $\mathcal{W}$ denotes the set of valuations $w \in M_{K}$ for which either 
\[w\left(\frac{(\alpha_{2}-\alpha_{3})\beta_{1}}{(\alpha_{3}-\alpha_{1})\beta_{2}}\right)\neq 0 \quad \textnormal{ or } \quad w\left(\frac{(\alpha_{1}-\alpha_{2})\beta_{3}}{(\alpha_{3}-\alpha_{1})\beta_{2}}\right)\neq 0.\]
\\
We bound the size of $|\mathcal{W}|$ from above, by counting the number of valuations for which either 
\begin{equation}\label{eq:non_zero_valuations}
w\left((\alpha_{2}-\alpha_{3})\beta_{1}\right)\neq 0 \quad \textnormal{or} \quad w\left((\alpha_{3}-\alpha_{1})\beta_{2}\right)\neq 0 \quad \textnormal{or} \quad w\left((\alpha_{1}-\alpha_{2})\beta_{3}\right)\neq 0.
\end{equation}
Since $(\alpha_{2}-\alpha_{3})\beta_{1}, (\alpha_{3}-\alpha_{1})\beta_{2}, (\alpha_{1}-\alpha_{2})\beta_{3}\in \OO_K$, we find that
\[w\left((\alpha_{2}-\alpha_{3})\beta_{1}\right), w\left((\alpha_{3}-\alpha_{1})\beta_{2}\right), w\left((\alpha_{1}-\alpha_{2})\beta_{3}\right)\geq 0
\]
at every finite place $w \in M_{K}$.  Hence, \eqref{eq:non_zero_valuations} holds at a given valuation $w \in M_{K}$ if and only if 
\[w\left((\alpha_2-\alpha_3)\beta_1(\alpha_3-\alpha_1) \beta_2(\alpha_1-\alpha_2) \beta_3\right) > 0.
\]
Since the $\beta_i$ are moreover units, and $\disc(f_{\lambda})=\prod_{1\leq i < j \leq 4}(\alpha_i-\alpha_j)^2$, we have that 
\[(\alpha_1-\alpha_2)(\alpha_2-\alpha_3)(\alpha_3-\alpha_1) \beta_1\beta_2 \beta_3| \disc(f_\lambda)=4(\lambda^2+16)^3.\]

Note that there are at most $2\a+1$ distinct valuations $v \in M_{\C(T)}$ such that $v(\disc(f))\neq 0$.  Therefore,
\[
	|\mathcal{W}|	
	\leq 2r_{K} + 4(2\a+1-r_{K})
	= 4 + 8\a -2r_{K}.
\]
Here we use the fact that if $v$ ramifies, then there are at most 2 distinct valuations lying above $v$, while if $v$ is unramified then there are exactly 4.

Finally, from \eqref{eq:Mason bound II} and the bound for $g_{K}$ provided in Lemma \ref{lem:genusK}, we conclude that
\[
H_K\left(\frac{(\alpha_{2}-\alpha_{3})\beta_{1}}{(\alpha_{3}-\alpha_{1})\beta_{2}}\right)
	\leq 2\left(\frac{3}{2}r_{K}-3\right) - 2+ 4 + 8\a -2r_{K}
	= -4 + 8\a + r_{K}
	\leq 10\a - 4,
\] 
as desired.
\end{proof}

\section{Proof of Theorem \ref{thm:main}}\label{sec:bound_beta_height}

\subsection{Bounding the Height of $\beta$}
Since $(\alpha_{2} - \alpha_{3})/(\alpha_{3} - \alpha_{1})$ is fixed, we can next bound the height of the unit $\beta_1/\beta_2$.

\begin{lemma}\label{lem:H_K-beta1beta2}
We have that
\[
	H_K\left(\frac{\beta_1}{\beta_2}\right)
	\leq 11\a -4.
\]
\end{lemma}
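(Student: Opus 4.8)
The plan is to relate $H_K(\beta_1/\beta_2)$ to the quantity bounded in Lemma \ref{lem:upperBound-H_K-gammas} by multiplying and dividing by the fixed constant $c := (\alpha_2-\alpha_3)/(\alpha_3-\alpha_1) \in K$. Writing
\[
\frac{\beta_1}{\beta_2} = c^{-1}\cdot \frac{(\alpha_2-\alpha_3)\beta_1}{(\alpha_3-\alpha_1)\beta_2},
\]
I would use the general subadditivity of the height under multiplication, namely $H_K(fg)\le H_K(f)+H_K(g)$, which follows directly from the definition $H_K(f) = -\sum_{w}\min(0,w(f))$ together with $w(fg)=w(f)+w(g)$ and $\min(0,a+b)\ge \min(0,a)+\min(0,b)$. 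This gives
\[
H_K\!\left(\frac{\beta_1}{\beta_2}\right) \le H_K(c^{-1}) + H_K\!\left(\frac{(\alpha_2-\alpha_3)\beta_1}{(\alpha_3-\alpha_1)\beta_2}\right) \le H_K(c^{-1}) + 10\a - 4,
\]
using Lemma \ref{lem:upperBound-H_K-gammas} for the second term. It then remains to show $H_K(c^{-1}) = H_K\big((\alpha_3-\alpha_1)/(\alpha_2-\alpha_3)\big) \le \a$.

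The main work is therefore bounding the height of the constant $c^{-1} = (\alpha_3 - \alpha_1)/(\alpha_2 - \alpha_3)$. Since this element has no dependence on the solution $(x,y)$, its height is a fixed small number, and I would compute it directly from the Laurent expansions in Lemma \ref{lem:alpha_shape}. At the finite places, both $\alpha_i - \alpha_j$ lie in $\OO_K$ and in fact divide $\disc(f_\lambda) = 4(\lambda^2+16)^3$ up to units; more precisely one checks, as in the proof of Lemma \ref{lem:val_of_fundamental_units} and the computation of $(\det A)_\infty$, that the only contributions to $H_K(c^{-1})$ come from places where $w(\alpha_3-\alpha_1) < w(\alpha_2-\alpha_3)$. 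At the infinite places, using $\alpha_1 = 1 + O(1/\lambda)$, $\alpha_3 = -1 + O(1/\lambda)$, $\alpha_2 = O(1/\lambda)$ one reads off $(\alpha_3-\alpha_1)_\infty$ and $(\alpha_2-\alpha_3)_\infty$ from the expansions of the conjugates (exactly the kind of computation already carried out in Lemma \ref{lem:val_of_fundamental_units}), and combines this with the fact (established around \eqref{eq:sets equal}) that conjugation permutes the places $w_1,\dots,w_4$ cyclically. Carrying this out shows $H_K(c^{-1}) \le \a$, and the lemma follows.

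The one subtlety to be careful about — and the place I expect the bookkeeping to be most delicate — is accounting for the finite places dividing $\lambda^2+16$ when computing $H_K(c^{-1})$: one must verify that the factors of $\disc(f_\lambda)$ appearing in the numerator $\alpha_3-\alpha_1$ are matched (at each such finite place) by at least as large a valuation in the denominator $\alpha_2-\alpha_3$, so that these finite places contribute nothing to $H_K(c^{-1})$. This is plausible because the three differences $(\alpha_1-\alpha_2)$, $(\alpha_2-\alpha_3)$, $(\alpha_3-\alpha_1)$ play symmetric roles under the cyclic action of $\phi$, but it requires checking that the multiset of finite valuations of $\alpha_3-\alpha_1$ is dominated by that of $\alpha_2-\alpha_3$ (or absorbing any discrepancy into the $\a$ we are allowed). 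If a clean place-by-place comparison proves awkward, the fallback is the cruder bound $H_K(c^{-1}) \le \tfrac{1}{2}H_K(\disc(f_\lambda)) $ together with a direct count of the infinite places, which still yields a bound of the required shape $11\a - 4$ after re-tallying constants; in any case the final inequality is obtained by adding the (small, solution-independent) height of $c^{-1}$ to the bound $10\a-4$ from Lemma \ref{lem:upperBound-H_K-gammas}.
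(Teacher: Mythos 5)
Your overall strategy---insert the constant $c=(\alpha_2-\alpha_3)/(\alpha_3-\alpha_1)$, use subadditivity of the height, and invoke Lemma~\ref{lem:upperBound-H_K-gammas}---is essentially the paper's, and the arithmetic $\a+(10\a-4)=11\a-4$ is right. The gap is that everything hinges on the claim $H_K\bigl((\alpha_3-\alpha_1)/(\alpha_2-\alpha_3)\bigr)\le \a$, and you establish only its infinite part: from Lemma~\ref{lem:alpha_shape} the places above $v_\infty$ contribute exactly $\a$, but for the finite places you offer a symmetry heuristic and a fallback, neither of which works as stated. The heuristic is not valid: $\alpha_3-\alpha_1$ and $\alpha_2-\alpha_3$ lie in \emph{different} Galois orbits (the orbit of $\alpha_1-\alpha_3$ is $\{\alpha_1-\alpha_3,\ \alpha_2-\alpha_4\}$, while $\alpha_2-\alpha_3$ lies in the orbit of the four ``adjacent'' differences), so ``symmetric roles under the cyclic action'' does not give the needed place-by-place domination. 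The fallback is quantitatively insufficient: $\disc(f_\lambda)=4(\lambda^2+16)^3$ has degree $6\a$, so $H_K(\disc(f_\lambda))=24\a$ and $\tfrac12 H_K(\disc(f_\lambda))=12\a$, which would yield $22\a-4$, not $11\a-4$; no ``re-tallying of constants'' recovers the lemma along that route.

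The gap can be closed in two ways. The paper sidesteps finite places entirely: since $\beta_1/\beta_2$ is a unit in $\OO_K$, its height is concentrated at infinity, $H_K(\beta_1/\beta_2)=H_\infty(\beta_1/\beta_2)$ where $H_\infty(f):=-\sum_{w\mid v_\infty}\min(0,w(f))$; then subadditivity of $H_\infty$ gives $H_\infty(\beta_1/\beta_2)\le H_\infty\bigl(\tfrac{(\alpha_2-\alpha_3)\beta_1}{(\alpha_3-\alpha_1)\beta_2}\bigr)+H_\infty\bigl(\tfrac{\alpha_3-\alpha_1}{\alpha_2-\alpha_3}\bigr)$, the first term is $\le H_K$ of the same quantity $\le 10\a-4$, and the second is read off as $\a$ from the Laurent expansions. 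Alternatively, your global version does go through once you notice the identity
\[
\frac{\alpha_3-\alpha_1}{\alpha_2-\alpha_3}
=\frac{-\tfrac{1}{\alpha}-\alpha}{\tfrac{\alpha-1}{\alpha+1}+\tfrac{1}{\alpha}}
=\frac{-(1+\alpha^2)/\alpha}{(1+\alpha^2)/(\alpha(\alpha+1))}
=-(\alpha+1),
\]
which is a unit by \eqref{eq:1overAlpha+1}; hence all its finite valuations vanish and Lemma~\ref{lem:val_of_fundamental_units} gives $H_K(-(\alpha+1))=\a$ exactly. As written, though, the decisive inequality $H_K(c^{-1})\le\a$ is asserted rather than proved, so the proof is incomplete at its key step.
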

\begin{proof}
Let us denote the \textit{local height} by 
\[
	H_a(f)
	:= -\sum_{w\mid v_a}\min(0,w(f))
	,\quad a\in \C\cup \{\infty\}.
\] 
Then
\begin{equation}\label{height inequality}
	H_K(f)
	=\sum_{a\in\C\cup\{\infty\}}H_a(f)
	\geq H_\infty(f),
\end{equation}
and since $w(fg) = w(f)+w(g)$ for  each valuation, it follows that
\[H_a(fg)\leq H_a(f) + H_a(g)\]
 for any $f,g\in K$. Moreover, since $\beta_1/\beta_2$ is a unit in $\OO_K$, we have
\begin{equation}\label{eq:H_Kbeta-estimate}
	H_K \left( \frac{\beta_1}{\beta_2} \right)
	= H_\infty \left( \frac{\beta_1}{\beta_2} \right)
	\leq H_\infty \left(\frac{(\alpha_{2}-\alpha_{3})\beta_{1}}{(\alpha_{3}-\alpha_{1})\beta_{2}} \right) +
	H_\infty \left(\frac{\alpha_3-\alpha_1}{\alpha_2-\alpha_3} \right).
\end{equation}

In order to compute the last height in the above estimation, we recall that
\[
	\alpha_1 = 1 + \dots, \quad
	\alpha_2 = -\frac{1}{\lambda} + \dots, \quad
	\alpha_3 = -1 + \dots, \quad
	\alpha_4 = \lambda + \dots .
\]
Therefore 
\[w_1\left(\frac{\alpha_3-\alpha_1}{\alpha_2-\alpha_3}\right)=w_1(\alpha_3-\alpha_1) - w_1(\alpha_2-\alpha_3) = w_{1}(2+\dots)-w_{1}(1+\dots)=0.\]
Similarly, $\iota_{2}(\alpha_3-\alpha_1) = \alpha_4-\alpha_2 = \lambda + \dots$, i.e. $w_2(\alpha_3-\alpha_1)=-\a$, and 
$\iota_{2}(\alpha_2-\alpha_3)=\alpha_3-\alpha_4 = -\lambda + \dots$, i.e. $w_2(\alpha_2-\alpha_3)=-\a,$ which together yields
\[w_2\left(\frac{\alpha_3-\alpha_1}{\alpha_2-\alpha_3}\right)=- \a - (-\a) = 0.\]
Finally, we compute $w_3((\alpha_3-\alpha_1)/(\alpha_2-\alpha_3))= 0- (-\a) = \a,$ and $w_4((\alpha_3-\alpha_1)/(\alpha_2-\alpha_3))= -\a - 0 = -\a.$
It follows that
\[\left(\frac{\alpha_3-\alpha_1}{\alpha_2-\alpha_3}\right)_\infty=(0,0,\a,-\a),\]
and therefore that
\begin{equation}\label{height of alpha ratio}
	H_\infty\left(\frac{\alpha_3-\alpha_1}{\alpha_2-\alpha_3}\right)=\a. 
\end{equation}
By inequality \eqref{eq:H_Kbeta-estimate}, followed by  \eqref{height inequality} and \eqref{height of alpha ratio}, and finally Lemma \ref{lem:upperBound-H_K-gammas}, we conclude that
\[
H_K \left( \frac{\beta_1}{\beta_2} \right) \leq H_\infty \left(\frac{(\alpha_{2}-\alpha_{3})\beta_{1}}{(\alpha_{3}-\alpha_{1})\beta_{2}} \right) +
	H_\infty \left(\frac{\alpha_3-\alpha_1}{\alpha_2-\alpha_3} \right) 
	\leq H_K\left(\frac{(\alpha_{2}-\alpha_{3})\beta_{1}}{(\alpha_{3}-\alpha_{1})\beta_{2}}\right) +\a 
	\leq 11\a -4,
\]
as desired.
\end{proof}

Finally, we obtain a bound for the height of $\beta$.

\begin{lemma}\label{lem:H_K-bound}
We have that
\[
	H_K(\beta)\leq 11\a - 4.
\]
\end{lemma}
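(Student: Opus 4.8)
The plan is to bound $H_K(\beta)$ in terms of $H_K(\beta_1/\beta_2)$ together with the contribution at the infinite places, exploiting the fact that $\beta = \beta_1$ is a unit in $\C[T][\alpha]^\times$. First I would observe that since $\beta$ is a unit, $w(\beta) = 0$ at every finite place, so $H_K(\beta) = H_\infty(\beta) = -\sum_{w\mid v_\infty}\min(0,w(\beta))$, and likewise $H_K(\beta_1/\beta_2)=H_\infty(\beta_1/\beta_2)$. Writing $(\beta)_\infty = (b_1,b_2,b_3,b_4)$ with $b_i = w_i(\beta)$, the product formula \eqref{eq:unit_product_formula} gives $b_1+b_2+b_3+b_4 = 0$, so $H_K(\beta) = -\sum_i \min(0,b_i) = \sum_{b_i>0} b_i = \tfrac12\sum_i |b_i|$.

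Next I would relate the $b_i$ to the valuations of $\beta_1/\beta_2$. By \eqref{eq:sets equal}, the multiset $\{w_j(\beta_1),w_j(\beta_2),w_j(\beta_3),w_j(\beta_4)\}$ equals $\{b_1,b_2,b_3,b_4\}$ for each fixed $j$; in particular $w_j(\beta_i) = w_{i+j-1}(\beta)=b_{i+j-1}$ (indices mod $4$), using the identity $\iota_j\circ\sigma_i = \iota_{i+j-1}$ established before Lemma \ref{lem:val_of_fundamental_units}. Hence $w_j(\beta_1/\beta_2) = w_j(\beta_1) - w_j(\beta_2) = b_j - b_{j+1}$, so
\[
(\beta_1/\beta_2)_\infty = (b_1-b_2,\; b_2-b_3,\; b_3-b_4,\; b_4-b_1).
\]
Thus $H_K(\beta_1/\beta_2) = \sum_{j:\,b_j>b_{j+1}} (b_j-b_{j+1})$, which equals half the total variation of the cyclic sequence $(b_1,b_2,b_3,b_4)$. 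The remaining point is purely combinatorial: given that $b_1+b_2+b_3+b_4=0$ (and that the $b_i$ are integers), one shows $\tfrac12\sum_i|b_i| \le \tfrac12\sum_j |b_j - b_{j+1}|$, i.e. $H_K(\beta)\le H_K(\beta_1/\beta_2)$. This inequality holds because for a zero-sum cyclic tuple, each $|b_i|$ is at most the sum of the "descents" separating it from the sign changes around the cycle; more directly, $2b_j = \sum_{k}(b_j - b_k)$ telescopes into consecutive differences, giving $|b_j|\le \tfrac12\sum_{\text{cyclic}}|b_{k}-b_{k+1}|$ for each $j$ — but one must be slightly careful and instead argue that $\max_i b_i - \min_i b_i \le \tfrac12\sum_j|b_j-b_{j+1}|$ and combine with $\sum b_i = 0$ to get $\sum_{b_i>0}b_i \le \max_i b_i - \min_i b_i$ when there is at least one sign change. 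Combining with Lemma \ref{lem:H_K-beta1beta2} then yields $H_K(\beta) \le H_K(\beta_1/\beta_2) \le 11\a - 4$.

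The main obstacle I anticipate is the combinatorial inequality relating $\sum_{b_i>0}b_i$ to the cyclic total variation $\sum_j|b_j-b_{j+1}|$: one has to handle carefully the edge cases (all $b_i$ equal to $0$, or only one sign change) and make sure the constant is exactly $1$ and not something larger. Everything else — using that $\beta$ is a unit, the product formula, and the conjugate-permutation identity \eqref{eq:sets equal} to compute $(\beta_1/\beta_2)_\infty$ — is routine given the machinery already developed in Sections \ref{sec:S-Units} and \ref{sec:unit_structure}.
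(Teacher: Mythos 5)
Your proposal is correct and follows essentially the same route as the paper: both arguments reduce to showing $H_K(\beta)\le H_K(\beta_1/\beta_2)$ by comparing the zero-sum vector $(b_1,b_2,b_3,b_4)=(\beta)_\infty$ with the cyclic differences $(b_j-b_{j+1})$ giving $(\beta_1/\beta_2)_\infty$, and the paper's version of the combinatorial step is precisely your fallback argument $H_K(\beta)\le \max_i b_i-\min_i b_i\le H_K(\beta_1/\beta_2)$ (done there with sorted values and a two-case analysis). The only blemishes are minor: your aside should read $4b_j=\sum_k(b_j-b_k)$ rather than $2b_j$, and the inequality $\sum_{b_i>0}b_i\le \max_i b_i-\min_i b_i$ for zero-sum tuples needs the one-line check that $-b_2\le b_4$ when two of the sorted entries are negative, exactly as in the paper's Case 2.
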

\begin{proof}
%Let $\{b_1,b_2,b_3,b_4\}=\{w_1(\beta),w_2(\beta),w_3(\beta),w_4(\beta)\}$ 
%be the four valuations of $\beta$ lying over $\infty$, 
%ordered in the way that
%\[
%	b_1\leq b_2 \leq b_3 \leq b_4.
%\]
%In order to bound $H_K(\beta)$ from above, we need to bound $b_1$ from below.
In the previous Lemma we obtained an upper bound for the height $H_K(\beta_1/\beta_2)$. Now we express it in a different way using the fact that $w_i(\beta_2)=w_i(\sigma(\beta_1))=w_{i+1}(\beta_1)$ (where, as always, $i+1$ is considered mod 4):
\begin{align*}
	H_K\left(\frac{\beta_1}{\beta_2}\right)
	&= - \sum_{i=1}^4 \min(0, w_i(\beta_1/\beta_2))
	= - \sum_{i=1}^4 \min(0, w_i(\beta_1)-w_i(\beta_2))\\
	&= \sum_{i=1}^4 \max(0,w_i(\beta_2)-w_i(\beta_1))
	= \sum_{i=1}^4 \max(0,w_{i+1}(\beta_1)-w_i(\beta_1)).
\end{align*}
In order to compute this sum, let us define $b_1,b_2,b_3,b_4$ such that
\[
	\{b_1,b_2,b_3,b_4\}=\{w_1(\beta),w_2(\beta),w_3(\beta),w_4(\beta)\}
	\quad \text{and} \quad
	b_1\leq b_2 \leq b_3 \leq b_4.
\]
Let $\psi$ be the permutation that maps the coefficients $\{1,2,3,4\}$ of the $w(\beta)$'s to the coefficients of the $b$'s, i.e.\ $\psi \colon \{1,2,3,4\} \to \{1,2,3,4\}$ such that
\[
	w_i(\beta)=b_{\psi(i)},
	\quad i=1,2,3,4.
\]
Next, we want to have a map $\varphi$ for the coefficients of the $b$'s such that if $b_i=w_j(\beta)$, then $b_{\varphi(i)}=w_{j+1}(\beta)$. Therefore, we define $\varphi \colon \{1,2,3,4\} \to \{1,2,3,4\}$,
\[
	\varphi(i) = \psi(\psi^{-1}(i)+1).
\]
Since $\psi$ is a bijection and $j \mapsto j+1 \pmod 4$ is a 4-cycle, it is clear that $\varphi$ is also a 4-cycle. Note that there exist 6 different 4-cycles.

Now we can use this notation to rewrite $H_K(\beta_1/\beta_2)$ and compute it:
\begin{align*}
	H_K\left(\frac{\beta_1}{\beta_2}\right)
	&= \sum_{j=1}^4 \max(0,b_{\varphi(j)}-b_j)\\
	&=\begin{cases}
	b_4 - b_1 & \text{if } \varphi \in \{(1234),(1243),(1342),(1432)\},\\
	b_4 - b_1 + b_3-b_2 &\text{if } \varphi \in \{(1324),(1423)\}.
	\end{cases}
\end{align*}
%where we used the cycle notation for the 4-cycles $\varphi$.
In any case,
\[
	H_K\left(\frac{\beta_1}{\beta_2}\right)
	\geq b_4-b_1,
\]
which together with Lemma \ref{lem:H_K-beta1beta2} yields
\[
	b_4-b_1
	\leq 11\a - 4.
\]

Note that $H_K(\beta)=H_K(\beta^{-1})$ by the product formula, and thus we may assume that either $b_1<0$ and $0\leq b_2\leq b_3 \leq b_4$ or $b_1\leq b_2 < 0$ and $0\leq b_3 \leq b_4$ (otherwise just consider $\beta^{-1}$ instead of $\beta$).

\textit{Case 1:} $b_1<0$ and $0\leq b_2\leq b_3 \leq b_4$. Then we obtain
\[
	H_K(\beta)
	= -b_1
	\leq - b_1 + b_4
	\leq 11\a - 4.
\]

\textit{Case 2:} $b_1\leq b_2 < 0$ and $0\leq b_3 \leq b_4$. Note that $2(-b_2)\leq -b_1 -b_2 =b_3 + b_4 \leq 2 b_4$, so $-b_2\leq b_4$. Thus we obtain
\[
	H_K(\beta)
	= (-b_1) + (-b_2)
	\leq -b_1 + b_4
	\leq 11\a - 4.
\]

In both cases we have proven the required upper bound.
\end{proof}
\subsection{Completion of Proof}
Finally, we proceed to the proof of Theorem \ref{thm:main}.

\begin{proof}[Proof of Theorem \ref{thm:main}]

Since $\beta \in \C[T][\alpha]^\times$ is a unit, by Proposition \ref{prop:fundSystem} it can be written as
\[
	\beta = \eta (\alpha-1)^r\alpha^s (\alpha+1)^t,
\] 
with $\eta \in \C^\times$ and $r,s,t \in \Z$.
Thus, together with Lemma \ref{lem:H_K-bound} we obtain
\begin{align*}
	11\a - 4
	\geq H_K(\beta)
	&= - \sum_{i=1}^4 \min (0, w_i(\eta (\alpha-1)^r\alpha^s (\alpha+1)^t))\\
	&= \sum_{i=1}^4 \max(0, - (w_i(\eta) + r w_i(\alpha-1) + s w_i(\alpha) + tw_i(\alpha+1))).
\end{align*}
Note that $w_i(\eta)=0$ for $i=1,2,3,4$, and recall that $(\alpha-1)_\infty=(\a,0,0,-\a)$, $(\alpha)_\infty=(0,\a,0,-\a)$ and $(\alpha+1)_\infty=(0,0,\a,-\a)$.
It follows that
\begin{align*}
	11\a-4
	\geq H_K(\beta)
	&=  \max(0,-r \a) + \max (0,-s \a) + \max(0,-t\a) + \max (0, (r+s+t)\a).
\end{align*}
This implies
\begin{equation}\label{eq:upper_bound_on_units}
	\max(0,-r) + \max(0,-s) + \max(0,-t) + \max(0,r+s+t) \leq 11-\frac{4}{\a}< 11.
\end{equation}
In particular, for each  $(r,s,t)\in \Z^3$ which satisfies the above inequality, we have that $|r|,|s|,|t|\leq 10$.  This is a (sufficiently small) finite set of values, and it remains to check which of the corresponding units $\beta = \eta (\alpha-1)^r\alpha^s (\alpha+1)^t \in \C[T][\alpha]^{\times}$
yield a solution $(x,y) \in S_{\lambda,\xi}$.  In particular, while a general unit is of the form $\beta = x_3 \alpha^3 + x_2 \alpha^2 + x_1 \alpha + x_0,$ where $x_0,x_{1},x_{2},x_{3} \in \C[T]$, we are interested in those units for which $x_{3} = x_{2}=0$, i.e. units of the form $\beta = x - \alpha y$, where $x, y \in \C[T].$
We implement these computations using Sage \cite{sagemath}, a code which is provided in the \nameref{sec:appendix} below.  In doing so, we find that the only relevant values $(r,s,t) \in \Z^{3}$ lie in the trivial set $\{(0,0,0),(1,0,0),(0,1,0),(0,0,1)\}$. Therefore, $\beta =x-\alpha y$ must lie in the set
\begin{multline*}
	\{\eta, \eta (\alpha-1), \eta \alpha, \eta(\alpha+1)
		\colon \eta \in \C^\times\} \\
	= 
\{ \eta - \alpha \cdot 0, -\eta - \alpha(- \eta), 0 - \alpha (-\eta) , \eta - \alpha (-\eta) \colon \eta \in \C^\times  
	\}
\end{multline*}
which implies that 
\begin{align*}
	(x,y) \in &\{(\eta,0),(-\eta,-\eta),(0,-\eta),(\eta,-\eta) \colon \eta \in \C^\times\}\\
&= \{(\eta,0),(\eta,\eta),(0,\eta),(\eta,-\eta) \colon \eta \in \C^\times\}.
\end{align*}
We have shown that any possible solution $(x,y)\in S_{\lambda,\xi}$ must lie in the above set. Plugging into $F_\lambda(X,Y)=\xi$, we find that the full solution set is indeed 
\[S_{\lambda,\xi}=\{(\eta,0),(0,\eta): \eta^{4} = \xi \} \cup \{(\eta,\eta), (\eta,-\eta): -4\eta^{4} = \xi \},
\] 
as desired.

\end{proof}

\section*{Appendix}\label{sec:appendix}
The following Sage code outputs the units $\beta =\eta (\alpha-1)^r\alpha^s (\alpha+1)^t \in \C[T][\alpha]^{\times}$ such that $(r,s,t) \in \Z^{3}$ satisfy \eqref{eq:upper_bound_on_units} and such that $\beta$ is of the form $\beta = x-\alpha y$, for $x,y \in \C[T]$.  The code may be run in less than a minute on a standard computer.  Note that although the computations technically take place in an extension of $\Q(L)$ (where $L$ is a stand-in for $\lambda$) they are exactly the same as when performed in $\C(T)(\alpha)$.
\begin{lstlisting}[language=Python]
Q.<L> = FunctionField(QQ)
R.<x> = Q[]
K.<alpha> = Q.extension(x^4 -L*x^3-6*x^2+L*x+1)

for r in range(-10, 10 + 1):
    for s in range(-10 + max(0, -r), 10 - max(0, r) + 1):
        for t in range(-10 + max(0, -r) + max(0, -s), 
                       10 - max(0, r) - max(0, s) + 1):
            beta = (alpha-1)^r * alpha^s * (alpha+1)^t
            betacoeff = beta.matrix()[0]
            if betacoeff[3] == 0 and betacoeff[2] == 0:
                print(r,s,t)
\end{lstlisting}

\bibliographystyle{habbrv}
\bibliography{ThueReferences}

\end{document}